\newtheorem{thrm}{Theorem}[section]
\newtheorem{lemma}[thrm]{Lemma}
\newtheorem{prop}[thrm]{Proposition}
\newtheorem{cor}[thrm]{Corollary}
\newtheorem{dfn}[thrm]{Definition}
\newtheorem{rmrk}[thrm]{Remark}
\newtheorem{conv}[thrm]{Convention}
\newtheorem{exam}[thrm]{Example}
\newcommand{\newsection}{    
\setcounter{equation}{0}\section}
\def\appendix#1{\addtocounter{section}{1}\setcounter{equation}{0}
\renewcommand{\thesection}{\Alph{section}}
\section*{Appendix \thesection\protect\indent \parbox[t]{11.15cm}{#1}}
\addcontentsline{toc}{section}{Appendix \thesection\ \ \ #1}}
\newcommand{\be}{\begin{eqnarray}}
\newcommand{\ee}{\end{eqnarray}}
\newcommand{\bea}{\begin{eqnarray}}
\newcommand{\eea}{\end{eqnarray}}
\newcommand{\ba}{\begin{array}}
\newcommand{\ea}{\end{array}}
\def\d{\delta}
\def\sb {{\nabla}}
\def\LC{{\nabla^g}}
\def\p{{\varphi}}
\def\ph{{\Phi}}
\def\ps{{\Psi^+}}
\def\sp{{\Psi^-}}
\begin{document}
\begin{center}
\vspace*{-1.0cm}


\vspace{1.2 cm} {\Large \bf The Riemannian curvature identities of a $G_2$ connection \\[3mm]  with  skew-symmetric torsion and generalized Ricci solitons
} 

\vspace{1cm}
 {\large S.~ Ivanov${}^1$ and  N. Stanchev$^2$}

\vspace{0.5cm}

${}^1$ University of Sofia, Faculty of Mathematics and
Informatics,\\ blvd. James Bourchier 5, 1164, Sofia, Bulgaria
\\and  Institute of Mathematics and Informatics,
Bulgarian Academy of Sciences

\vspace{0.5cm}
${}^2$ University of Sofia, Faculty of Mathematics and
Informatics,\\ blvd. James Bourchier 5, 1164, Sofia, Bulgaria\\

\vspace{0.5cm}

\end{center}

\begin{abstract}
Curvature properties of the characteristic connection on an integrable $G_2$ manifold  are investigated.   We consider integrable $G_2$ manifold of constant type, i.e. the scalar product of the exterior derivative of the $G_2$ form with its Hodge dual is a constant. We show that on an integrable $G_2$ manifold of constant type with $G_2$-instanton characteristic curvature  and vanishing Ricci tensor the torsion 3-form is harmonic. Consequently, we prove that the characteristic curvature is symmetric in exchange the first and the second pair and Ricci flat  if and only if the three-form torsion is  parallel with respect to the Levi-Civita and to the characteristic connection simultaneously and this is equivalent to the condition that the characteristic curvature   satisfies the Riemannian first Bianchi identity. We find that the Hull connection is a $G_2$-instanton exactly when the torsion is closed. We observe  that any compact integrable $G_2$ manifold with closed torsion is a generalized gradient Ricci soliton and this is equivalent to a certain vector field to be parallel with respect to the characteristic connection. In particular, this vector field is  an infinitesimal automorphism of the $G_2$ structure and preserves the torsion three form.

\medskip

AMS MSC2010: 53C55, 53C21, 53D18, 53Z05
\end{abstract}


\tableofcontents

\setcounter{section}{0}
\setcounter{subsection}{0}



\newsection{Introduction}
Riemannian manifolds with metric connections having totally skew-symmetric torsion and special holonomy received a lot of interest in mathematics and theoretical physics mainly from supersymmetric string theories and supergravity.  The main reason comes from the Hull-Strominger system which describes the supersymmetric background in heterotic string theories \cite{Str,Hull}. The number of preserved supersymmetries depends on the number  of  parallel spinors with respect to a metric connection $\sb$ with totally skew-symmetric torsion $T$. 
The existence  of a $\nabla$-parallel spinor leads to a restriction of the
holonomy group $Hol(\nabla)$ of the torsion connection $\nabla$.
Namely, $Hol(\nabla)$ has to be contained in $SU(n), dim=2n$, $Sp(n), dim=4n$ 
\cite{Str,GMW,sethi,IP1,IP2,Car,BB,BBE,GIP}, the exceptional group $G_2,
dim=7$ \cite{FI,GKMW,FI1}, the Lie group $Spin(7), dim=8$
\cite{GKMW,I1}. A detailed analysis of the possible geometries is
carried out in \cite{GMW}. 

The Hull-Strominger system for $SU(n)$ or $Sp(n)$ holonomy has been investigated intensively , see e.g.  \cite{LY,yau,yau1,FIUV,XS,OLS,PPZ,PPZ1,PPZ2,PPZ3,PPZ4,FHP,FHP1,CPYau,CPY1,Ph} and references therein.

The Hull-Strominger system in dimension seven considered in \cite{CGFT,Oss2} is known as  the $G_2$-Strominger system or heterotic $G_2$ system \cite{Oss2}. It  consists of the supersymmetry  equations and the anomaly cancellation condition. The latter expresses the exterior derivative of the 3-form torsion in terms of a difference of the first Pontrjagin forms of an $G_2$ instanton connection on an auxiliary vector bundle and a connection on the tangent bundle. The extra requirements for a solution of the supersymmetry equations  and the anomaly cancellation condition to provide  a supersymmetric vacuum of the theory is given by the $G_2$ instanton condition on the connection on the tangent bundle \cite{Iv} (see also \cite{MS,XS}). The $G_2$ instanton condition means that the curvature 2-form belongs to the Lie algebra $\frak{g}_2$ of the Lie group $G_2$. In general, 
 Hull \cite{Hull}  used the more physically accurate Hull connection   to define
the first Pontrjagin form on the tangent bundle. However, this choice leads to a system of equations, which is not
mathematically closed: e.g. the curvature of the Hull connection is only an  instanton modulo higher order  corrections, see \cite{MS}.  In this spirit, it  seems interesting to investigate when the Hull connection is a $G_2$ instanton. 


Necessary and sufficient  conditions for a $G_2$ structure  $\p$ to admit a metric connection with torsion 3-form preserving the $G_2$ structure are found in  \cite{FI}, namely the $G_2$ structure has to be integrable,  $d*\p=\theta\wedge*\p$, where $\theta$ is the Lee form defined below in \eqref{g2li} (see also \cite{GKMW,FI1,GMW,GMPW,II}). The $G_2$ connection constructed in \cite[Theorem~4.8]{FI} is unique and it is called  \emph{the characteristic connection}. 

From the point of view of physics, the $G_2$-Strominger system is a particular instance of a more
general system of equations, known as the Killing spinor equations in (heterotic) supergravity.
The compactification of the physical theory leads to the study of models of the form $N^k\times M^{10-k}$, where $N^k$ is a $k$-dimensional Lorentzian manifold and $M^{10-k}$ is a Riemannian spin
manifold which encodes the extra dimensions of a supersymmetric vacuum. 
 For application to the $G_2$-Strominger system, the integrable $G_2$ structure should be \emph{strictly integrable}, i.e. the scalar product $(d\p,*\p)=0$,  and the Lee form $\theta$ has to be an exact form, \cite{GKMW}.   It should be mentioned that strictly integrable  $G_2$ structure with an exact Lee form enforce $N=R^{1,2}$ in the compactification. A different compactification ansatz, with $N$ anti-de Sitter space-time, leads to a more general class of solutions with $(d\p,*\p)=\lambda=const.$ \cite{OLS} and the constant  $(d\p,*\p)$ is interpreted as the AdS radius \cite{Oss1,Oss2} see also  \cite[Section~5.2.1]{AMP}. We call this class \emph{integrable $G_2$ structure of constant type}.  Compact solutions to the heterotic $G_2$ system are constructed in \cite{FIUV1,OLM,LE}.  A geometric flow point of view on the  $G_2$-Strominger system in dimension seven has been developed recently in \cite{AMP}.
 
Special attention is also paid when the torsion 3-form $T$  is closed. For example, in type II string theory, $T$ is identified with the 3-form field strength. This is required by construction to satisfy $dT=0$  (see e.g. \cite{GKMW,GMW}).  
More generally, the geometry of a torsion connection with closed torsion form appears  in the framework of the generalized Ricci flow and the  generalized (gradient) Ricci solitons developed by Garcia-Fernandez and Streets \cite{GFS} (see  the references therein).

 The main purpose of the paper is to develop  curvature properties of the characteristic connection on 7-dimensional integrable $G_2$ manifold and to find necessary conditions for the integrable $G_2$ structure to be of constant type. We investigate  when the   characteristic and  the Hull connections have curvature which is a $G_2$ instanton. We consider the problem when the condition  $R\in S^2\Lambda^2$ of the curvature $R$ of the characteristic connection  implies the validity of the Riemannian first Bianchi identity \eqref{RB}.  
 
In what follows, we call  the curvature $R$ of the characteristic connection  \emph{the characteristic curvature } and the Ricci tensor $Ric$ of the characteristic connection,\emph{the characteristic Ricci tensor}.
 
Our first aim  is to show the following
\begin{thrm}\label{inst}
Let $(M,\p)$ be  an integrable $G_2$ manifold of constant type and the  curvature of the characteristic  connection $\sb$   is a Ricci flat  $G_2$-instanton, i.e.
\[d*\p=\theta\wedge*\p \qquad (d\p,*\p)=const.,\quad R\in \frak{g}_2\otimes\frak{g}_2, \quad Ric=0.\]
Then the torsion 3-form is harmonic, $\delta T=dT=0$, and the covariant derivatives of the 3-form $T$ with respect to the Levi-Civita connection and the characteristic connection coincide, $\LC T=\sb T$.
\end{thrm}
As a consequence of Theorem~\ref{inst}, we obtain
\begin{thrm}\label{mainsu3}
On an integrable $G_2$ manifold of constant type, the following conditions are equivalent:
\begin{itemize}
\item[a)] The characteristic connection  has curvature $R \in S^2\Lambda^2$ with vanishing characteristic Ricci tensor;
\item[b)] The curvature of the characteristic connection satisfies the Riemannian first Bianchi identity \eqref{RB};
\item[c)] The torsion 3 form  is  parallel  with respect to the Levi-Civita and to the characteristic connections simultaneously, $\LC T=\sb T=0$.
\end{itemize}
In these cases 
the exterior derivative $d\p$ of the $G_2$-form $\p$ is  $\sb$-parallel, $\sb(d\p)=0$.
\end{thrm} 
Concerning the Hull connection, we show in Theorem~\ref{hul} that it is a $G_2$ instanton  exactly when the  torsion 3-form is closed. This may have  applications  also   in type II string theories.

Integrable $G_2$ structures  with parallel torsion 3-form with respect to the characteristic connection are investigated in  \cite{F,AFer,CMS}   and a large number of examples are given there. 
Note that integrable $G_2$ structures  with $\sb$-parallel torsion 3-form  have co-closed Lee form. More generally,   due to  \cite[Theorem~3.1]{FI1},  for any  integrable $G_2$ structure  on a compact manifold there exists a unique integrable $G_2$ structure conformal to the original one with co-closed Lee form,  called \emph{the Gauduchon $G_2$ structure}.

It is known that if the $G_2$ structure $\p$ is co-calibrated, $d*\p=0$ then the characteristic Ricci tensor 
vanishes if and only if the torsion 3-form is harmonic  \cite[Theorem~5.4]{FI}. In this case, the scalar product $(d\p,*\p)$ is constant, i.e. the  integrable $G_2$ manifold is  of constant type.  Co-calibrated  $G_2$ structures  with vanishing characteristic Ricci tensor are investigated in detail in \cite{Fr}.

We extend the above result as follows
 \begin{thrm}\label{cooo}
Let $(M,\p)$ be a compact integrable $G_2$ manifold with a Gauduchon $G_2$ structure.  

If the characteristic Ricci tensor $Ric$ is symmetric and nonnegative then  $Ric=0$, the torsion 3-form is closed, co-closed with constant norm, the $G_2$ structure is of constant type with $\sb$-parallel Lee form.
\end{thrm}

 The characteristic Ricci tensor was computed in terms of the exterior derivative $dT$ and the covariant derivative $\sb T$ of the torsion 3-form $T$ in \cite[Theorem~5.1]{FI}. An immediate consequence is that if the torsion is closed then the characteristic Ricci tensor $Ric$ is equal to minus the covariant derivative of the Lee form. 
We observe   that the converse is also true and in this case, the integrable $G_2$ structure is of constant type.  This helps to show the validity of the next result.
\begin{thrm}\label{closTt}
Let $(M,\p)$ be a  compact  integrable $G_2$ manifold with closed torsion, $dT=0$. 

The following conditions are equivalent:
\begin{itemize}
\item[a).] The characteristic  connection  is Ricci flat, $Ric=0$;
\item[b).] The $G_2$ structure is a Gauduchon $G_2$ structure, $\delta\theta=0$;
\item[c).] The norm of the torsion is constant, $d||T||^2=0$;
\item[d).] The Riemannian scalar curvature is a non-negative constant, $Scal^g=const.\ge 0$.
\end{itemize}
In each of these cases,  the torsion is a harmonic 3-form.
\end{thrm}
As a consequence of Theorem~\ref{closTt}, we get
\begin{cor}\label{parallel}
A compact integrable $G_2$ manifold $(M,\p)$ with closed torsion and zero Riemannian scalar curvature is parallel, $\LC\p=0$.
\end{cor}

We show in Theorem~\ref{inf} that any compact integrable $G_2$ manifold with closed torsion 3-form is a steady generalized gradient Ricci soliton  and this condition is equivalent to a certain vector field to be parallel with respect to the characteristic connection. We also find out that this  vector field  is an infinitesimal automorphism of the $G_2$  structure  and preserves the torsion three form. 

Viewing integrable $G_2$ structures with closed torsion (equivalently, with an $G_2$ instanton Hull connection) as steady generalized Ricci solitons supplies, in general, a parallel vector field V which preserves the $G_2$ structure and the torsion 3-form T. These facts could help to understand more precisely the transversal SU(3) geometry (c.f. \cite{OLM})  
and  may reflect to a possible more precise description of the structure of integrable $G_2$ manifolds with closed  torsion, and could be a subject of a subsequent paper. In particular, the non-characteristic-flat compact example $SU(2)\times K3$ described in \cite{PP,FF} (see Example~\ref{noflate}) is an example of a steady generalized Ricci soliton.

\begin{rmrk} We recall \cite[Theorem~4.1]{AFF} which states that an irreducible complete and simply connected Riemannian manifold of dimension bigger or equal to $5$ with $\sb$-parallel and closed torsion 3-form, $\sb T=dT=0$ (which is equivalent to $\sb T=\sigma^T=0$ due to \eqref{sigma} and \eqref{dh}  below), is a simple compact Lie group or its dual non-compact symmetric space with biinvariant metric, and, in particular the torsion connection is the flat Cartan connection. In this spirit, our results above imply that  the irreducible complete and simply connected case in Theorem~\ref{mainsu3} cannot occur since the $G_2$ manifold should be a simple  compact Lie group of dimension seven but it is well known that there are no such groups. 
\end{rmrk}
We remark that similar investigations were done also for Spin(7) manifold in \cite{IP}.
\begin{conv}
Everywhere in the paper we will make no difference between tensors and the corresponding forms via the metric as well as we will  use Einstein summation conventions, i.e. repeated Latin  indices are summed over.

\end{conv}

\section{Preliminaries}
In this section, we recall some known curvature properties of a metric connection with totally skew-symmetric torsion on a Riemannian manifold as well as 
the notions and existence of a metric  connection preserving a given $G_2$ structure and having totally skew-symmetric torsion from \cite{I,FI,IS}. 
\subsection{Metric connection with skew-symmetric  torsion and its curvature}
On a Riemannian manifold $(M,g)$ of dimension $n$ any metric connection $\sb$ with totally skew-symmetric torsion $T$ is connected with the Levi-Civita connection $\sb^g$ of the metric $g$ by
\begin{equation}\label{tsym}
\sb^g=\sb- \frac12T \quad leading\quad to \quad \LC T=\sb T+\frac12\sigma^T,
\end{equation}
where the 4-form $\sigma^T$, introduced in \cite{FI}, is defined by
 \begin{equation}\label{sigma}
 \sigma ^T(X,Y,Z,V)=\frac12\sum_{j=1}^n(e_j\lrcorner T)\wedge(e_j\lrcorner T)(X,Y,Z,V), 
\end{equation} 
   $(e_a\lrcorner T)(X,Y)=T(e_a,X,Y)$ is the interior multiplication and $\{e_1,\dots,e_n\}$ is an orthonormal  basis.

The properties of the 4-form $\sigma^T$ are studied in detail in \cite{AFF} where it is shown that $\sigma^T$ measures the `degeneracy' of the 3-form $T$.

The exterior derivative $dT$ has the following  expression (see e.g. \cite{I,IP2,FI})
\begin{equation}\label{dh}
\begin{split}
dT(X,Y,Z,V)=d^{\sb}T(X,Y,Z,V) +2\sigma^T(X,Y,Z,V), \quad where\\
d^{\sb}T(X,Y,Z,V)=(\nabla_XT)(Y,Z,V)+(\nabla_YT)(Z,X,V)+(\nabla_ZT)(X,Y,V)-(\nabla_VT)(X,Y,Z).
 \end{split}
 \end{equation}
 For the curvature of $\sb$ we use the convention $ R(X,Y)Z=[\nabla_X,\nabla_Y]Z -
 \nabla_{[X,Y]}Z$ and $ R(X,Y,Z,V)=g(R(X,Y)Z,V)$. It has the well known properties
 \begin{equation}\label{r1}
 R(X,Y,Z,V)=-R(Y,X,Z,V)=-R(X,Y,V,Z).
 \end{equation}
  The first Bianchi identity for $\nabla$ can be written in the form (see e.g. \cite{I,IP2,FI})
 \begin{equation}\label{1bi}
 \begin{split}
 R(X,Y,Z,V)+ R(Y,Z,X,V)+ R(Z,X,Y,V)\\ 
 =dT(X,Y,Z,V)-\sigma^T(X,Y,Z,V)+(\nabla_VT)(X,Y,Z).
 \end{split}
 \end{equation}
It is proved in \cite[p. 307]{FI} that the curvature of  a metric connection $\sb$ with totally skew-symmetric torsion $T$  satisfies also the  identity
 \begin{equation}\label{gen}
 \begin{split}
 R(X,Y,Z,V)+ R(Y,Z,X,V)+ R(Z,X,Y,V)-R(V,X,Y,Z)-R(V,Y,Z,X)-R(V,Z,X,Y)\\
 =\frac32dT(X,Y,Z,V)-\sigma^T(X,Y,Z,V).
 \end{split}
 \end{equation}
 One gets from \eqref{gen} and \eqref{1bi} that the curvature of the torsion connection satisfies the identity 
 \begin{equation}\label{1bi1}
 \begin{split}
R(V,X,Y,Z)+R(V,Y,Z,X)+R(V,Z,X,Y)= -\frac12dT(X,Y,Z,V)+(\nabla_VT)(X,Y,Z)
 \end{split}
 \end{equation}
 \begin{dfn} We say that the curvature $R$ satisfies the Riemannian first Bianchi identity if 
\begin{equation}\label{RB}
R(X,Y,Z,V)+R(Y,Z,X,V)+R(Z,X,Y,V)=0.
\end{equation}
\end{dfn}
 It is well known algebraic fact that \eqref{r1} and \eqref{RB} imply $R\in S^2\Lambda^2$, i.e 
 \begin{equation}\label{r4}
 R(X,Y,Z,V)=R(Z,V,X,Y).
 \end{equation}
 Note that, in general, \eqref{r1} and \eqref{r4} do not imply \eqref{RB}.

We know from  \cite[Corollary~3.4]{I} that a metric connection $\sb$ with totally skew-symmetric torsion $T$ satisfies 
\eqref{r4}  if and only if  $\sb T$ is a 4-form.  
 We  need the following  result from \cite{IS}
\begin{thrm} \cite[Theorem~1.2]{IS}\label{tFBI}
A metric connection $\sb$ with torsion 3-form $T$ satisfies the Riemannian first Bianchi identity exactly when the following identities hold 
\begin{equation*}
dT=-2\nabla T=\frac23\sigma^T.
\end{equation*}
In this case, the torsion  $T$ is parallel with respect to a metric connection with torsion 3-form $\frac13T$ \cite{AF} and therefore has a constant norm, $||T||^2=const.$
\end{thrm}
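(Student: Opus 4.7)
The plan is to read the equivalence straight off the two Bianchi-type identities \eqref{1bi} and \eqref{gen}, with the four-form character of $\sb T$ dropping out along the way (recovering \cite[Lemma~3.4]{I}).

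\emph{Forward direction.} Assume \eqref{RB}. Then each of the two cyclic sums appearing on the left-hand side of \eqref{gen} vanishes, so the right-hand side must vanish too: $\tfrac{3}{2}dT - \sigma^T = 0$, giving at once $dT = \tfrac{2}{3}\sigma^T$. Feeding this identity and the vanishing cyclic curvature sum back into \eqref{1bi} yields
\[
(\sb_V T)(X,Y,Z) \;=\; \sigma^T(X,Y,Z,V) - dT(X,Y,Z,V) \;=\; \tfrac{1}{2}\,dT(X,Y,Z,V).
\]
The right-hand side is totally antisymmetric in $X,Y,Z,V$, which shows that $\sb T$ is a 4-form; using $dT(X,Y,Z,V) = -dT(V,X,Y,Z)$ (a cyclic shift of four entries is an odd permutation) the display rewrites as $\sb T = -\tfrac{1}{2}dT$, i.e.\ $dT = -2\sb T$.

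\emph{Converse.} Assuming $dT = -2\sb T = \tfrac{2}{3}\sigma^T$, the same 4-form antisymmetry of $dT$ gives $(\sb_V T)(X,Y,Z) = -\tfrac{1}{2}\,dT(V,X,Y,Z) = \tfrac{1}{2}\,dT(X,Y,Z,V)$, so the right-hand side of \eqref{1bi} collapses to $dT - \tfrac{3}{2}dT + \tfrac{1}{2}dT = 0$, proving \eqref{RB}.

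\emph{Parallel torsion.} For the final claim, introduce $\tilde\sb := \LC + \tfrac{1}{6}T$, the metric connection with skew torsion $\tfrac{1}{3}T$. A derivation parallel to that of \eqref{tgt}, with the torsion rescaled by $\tfrac{1}{3}$, produces $\LC T = \tilde\sb T + \tfrac{1}{6}\sigma^T$. On the other hand, \eqref{tgt} together with the already established $\sb T = -\tfrac{1}{2}dT$ and $dT = \tfrac{2}{3}\sigma^T$ gives $\LC T = \tfrac{1}{6}\sigma^T$. Comparing the two expressions forces $\tilde\sb T = 0$, and since $\tilde\sb$ is metric, $\|T\|^2$ is automatically constant.

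The only real obstacle is sign bookkeeping: the factors $-2$, $\tfrac{2}{3}$ and $\tfrac{1}{6}$ all come from treating $dT$, $\sigma^T$ and $\sb T$ as 4-forms and carefully tracking the signs of the cyclic permutations needed to identify them.
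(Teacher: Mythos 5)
Your argument is correct. Note, however, that the paper itself offers no proof of this statement: Theorem~\ref{tFBI} is imported verbatim from \cite[Theorem~1.2]{IS}, so there is nothing internal to compare against. What you have done is reconstruct the missing argument from the two Bianchi-type identities the paper does record, and the reconstruction is sound: under \eqref{RB} both cyclic sums on the left of \eqref{gen} vanish (the second one because \eqref{r1} together with \eqref{RB} forces the pair symmetry \eqref{r4}, a fact the paper states explicitly, or alternatively by summing three instances of \eqref{RB}), giving $dT=\tfrac23\sigma^T$; substituting back into \eqref{1bi} yields $(\sb_VT)(X,Y,Z)=\tfrac12 dT(X,Y,Z,V)$, which both shows $\sb T$ is a $4$-form and, after the odd cyclic shift, gives $dT=-2\sb T$. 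The converse by reversing the substitution in \eqref{1bi} is equally clean. Your derivation of the final claim — comparing $\LC T=\sb T+\tfrac12\sigma^T$ from \eqref{tgt} with the analogous identity $\LC T=\tilde\sb T+\tfrac16\sigma^T$ for the connection with torsion $\tfrac13T$ to conclude $\tilde\sb T=0$ and hence $\|T\|^2=const$ — is exactly the mechanism behind the citation of \cite{AF}. The only point worth flagging is the glossed step already mentioned: ``each of the two cyclic sums vanishes'' silently invokes the algebraic implication \eqref{r1}$+$\eqref{RB}$\Rightarrow$\eqref{r4}; since the paper records that implication as well known, this is a presentational gap rather than a mathematical one.
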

\noindent The   Ricci tensors and scalar curvatures of  $\LC$ and $\sb$ are related by (\cite[Section~2]{FI}, \cite [Prop. 3.18]{GFS})
\begin{equation}\label{rics}
\begin{split}
Ric^g(X,Y)=Ric(X,Y)+\frac12 (\delta T)(X,Y)+\frac14\sum_{i,j=1}^nT(X,e_i,e_j)T(Y,e_i,e_j);\\
Scal^g=Scal+\frac14||T||^2,\qquad Ric(X,Y)-Ric(Y,X)=-(\delta T)(X,Y),
\end{split}
\end{equation}
where $\delta=(-1)^{np+n+1}*d*$ is the co-differential acting on $p$-forms and $*$ is the Hodge star operator satisfying $*^2=(-1)^{p(n-p)}$.

Following \cite{GFS}, we denote 
$T^2_{ij}=T_{iab}T_{jab}:=\sum_{a,b=1}^nT_{iab}T_{jab}.$
Then the first equality in \eqref{rics} reads
$$Ric^g=Ric+\frac12\delta T+\frac14T^2.$$


\section{$G_2$ structure}

We recall some notions of $G_2$ geometry. Endow $\mathbb R^7$ with its standard orientation and
inner product. Let $\{e_1,\dots,e_7\}$ be an oriented orthonormal basis which we identify with
the dual basis via the inner product.Write $e_{i_1 i_2\dots i_p}$ for the
monomial $e_{i_1} \wedge e_{i_2} \wedge \dots \wedge e_{i_p}$
We shall omit the $\sum$-sign understanding summation on any pair of equal indices.

Consider the three-form $\p$ on $\mathbb R^7$ given by
\begin{equation}\label{g2}
\begin{split}
\p=e_{127}+e_{135}-e_{146}-e_{236}-e_{245}+e_{347}+e_{567}.
\end{split}
\end{equation}
The subgroup of $GL(7,\mathbb R)$ fixing $\p$ is the exceptional Lie group $G_2$. It is a compact,
connected, simply-connected, simple Lie subgroup of $SO(7)$ of dimension 14 \cite{Br}.  The Lie algebra is denoted by 
 $\frak{g}_2$ and it is isomorphic to the 2-forms satisfying 7 linear equations, namely
 $\frak{g}_2\cong  \{\alpha\in \Lambda^2(M) \vert
  *(\alpha\wedge\p) =- \alpha\}$
The 3-form
$\p$ corresponds to a real spinor $\epsilon$ and therefore,
$G_2$ can be identified as the isotropy group of a non-trivial
real spinor.

The Hodge star operator supplies the 4-form $\ph=*\p$ given by
\begin{equation*}
\begin{split}
\ph=*\p=e_{1234}+e_{3456}+e_{1256}-e_{2467}+e_{1367}+e_{2357}+e_{1457}
\end{split}
\end{equation*}
We recall that in dimension seven, the Hodge star operator satisfies $*^2=1$  and has the properties
\begin{equation}\label{star}
\begin{split}
*(\alpha\wedge\gamma)=(-1)^k\alpha\lrcorner *\gamma, \qquad \alpha\in \Lambda^1, \quad \gamma\in \Lambda^k;\\
*(\beta\wedge\p)=\beta\lrcorner *\p, \quad \beta\in \Lambda^2\qquad 
*(\beta\wedge *\p)=\beta\lrcorner \p, \quad \beta\in \Lambda^2.
\end{split}
\end{equation}

We let the expressions 
$$
 \p=\frac16\p_{ijk}e_{ijk}, \quad  \ph=\frac1{24}\ph_{ijkl}e_{ijkl}
$$
and have the  identites (c.f. \cite{Br1,Kar1,Kar})
\begin{equation}\label{iden}
\begin{split}
\p_{ijk}\p_{ajk}=6\delta_{ia};\qquad \p_{ijk}\p_{ijk}=42;\\
 \p_{ijk}\p_{abk}=\delta_{ia}\delta_{jb}-\delta_{ib}\delta_{ja}+\ph_{ijab};\quad
\p_{ijk}\ph_{abjk}=4\p_{iab};\\
\p_{ijk}\ph_{kabc}=\delta_{ia}\p_{jbc}+\delta_{ib}\p_{ajc}+\delta_{ic}\p_{abj}-\delta_{aj}\p_{ibc}-\delta_{bj}\p_{aic}-\delta_{cj}\p_{abi};\\
\ph_{ijkl}\ph_{abkl}=4\delta_{ia}\delta_{jb}-4\delta_{ib}\delta_{ja}+2\ph_{ijab};\quad \ph_{ijkl}\ph_{ajkl}=24\delta_{ia};\\
3\ph_{ijkl}\ph_{abcl}=3\big(\delta_{ia}\delta_{jb}\delta_{kc}+\delta_{ib}\delta_{jc}\delta_{ka}+\delta_{ic}\delta_{ja}\delta_{kb}
-\delta_{ia}\delta_{jc}\delta_{kb}-\delta_{ib}\delta_{ja}\delta_{kc}-\delta_{ic}\delta_{jb}\delta_{ka}\big)\\
-\big(\p_{ajk}\p_{ibc}+\p_{bjk}\p_{ica}+\p_{cjk}\p_{iab}\big)
-\big(\p_{iak}\p_{jbc}+\p_{ibk}\p_{jca}+\p_{ick}\p_{jab}  \big)\\
-\big(\p_{ija}\p_{kbc}+\p_{ijb}\p_{kca}+\p_{ijc}\p_{kab}  \big)
+\big(\delta_{ia}\ph_{jkbc}+\delta_{ib}\ph_{jkca}+\delta_{ic}\ph_{jkab}  \big)\\
+\big( \delta_{ja}\ph_{kibc}+\delta_{jb}\ph_{kica}+\delta_{jc}\ph_{kiab} \big)
+\big( \delta_{ka}\ph_{ijbc}+\delta_{kb}\ph_{ijca}+\delta_{kc}\ph_{ijab} \big).
\end{split}
\end{equation}
Note that in \cite{Kar} a different sign convention is used so the identities have
some signs different (see Remark~\ref{remconv} below).

A $G_2$ structure on a 7-manifold $M$ is a reduction of the structure group
of the tangent bundle to the exceptional Lie group $G_2$. Equivalently, there exists a nowhere
vanishing differential three-form $\p$ on $M$ and local frames of the cotangent bundle with
respect to which $\p$ takes the form \eqref{g2}. The three-form $\p$ is called the fundamental
form of the $G_2$ manifold $M$ \cite{Bo}.
We will say that the pair $(M, \p)$ is a $G_2$ manifold with $G_2$ structure (determined
by) $\p$.  Alternatively, a $G_2$ structure can be described by the existence of a two-fold
vector cross product  on the tangent spaces of $M$ (see e.g. \cite{Gr}).

It is well known that the fundamental form of a $G_2$ manifold determines a Riemannian metric which  is referred to as the metric induced by $\p$. We write $\LC$ for the associated Levi-Civita
connection. 


The action of $G_2$  on the tangent space induces an
action of $G_2$ on $\Lambda^k(M)$ splitting the exterior algebra into orthogonal subspaces, where
$\Lambda^k_l$ corresponds to an $l$-dimensional $G_2$-irreducible subspace of $\Lambda^k$:
\begin{equation*}\label{dec}
\begin{split}
\Lambda^1(M)=\Lambda^1_7,  \quad \Lambda^2(M)=\Lambda^2_7\oplus\Lambda^2_{14}, \qquad \Lambda^3(M)=\Lambda^3_1\oplus\Lambda^3_7\oplus\Lambda^3_{27},
\end{split}
\end{equation*}
where
\begin{equation}\label{dec2}
\begin{split}
\Lambda^2_7=\{\phi\in \Lambda^2(M) | *(\phi\wedge\p)=2\phi\};\\
\Lambda^2_{14}=\{\phi\in \Lambda^2(M) | *(\phi\wedge\p)=-\phi\}\cong g_2;\\
\Lambda^3_1=t\p,\quad t\in \mathbb R;\\
\Lambda^3_7=\{*(\alpha\wedge\p) |  \alpha\in\Lambda^1\}=\{\alpha\lrcorner\ph\};\\
\Lambda^3_{27}=\{\gamma\in \Lambda^3(M) | \gamma\wedge\p=\gamma\wedge\ph=0\}.
\end{split}
\end{equation}
Denote by $S^2_-$ the space of symmetric traceless 2-tensors, 
\begin{equation}\label{2s}
h(X,Y)=h(Y,X),\qquad tr_gh=0.
\end{equation}
It is known (see \cite{Br1,Kar,Kar1}) that the map $\gamma: S^2_-\Leftrightarrow \Lambda^3_{27}$ defined by 
\begin{equation}\label{s12}
\gamma(h_{ij})=h_{ip}\p_{pjk}+h_{jp}\p_{pki}+h_{kp}\p_{pij},\qquad  h_{im}=\gamma^{-1}(B_{ijk})=\frac14B_{ijk}\p_{mjk}
\end{equation}
is an isomorphism of $G_2$ representations.

We recall and give a proof of the next algebraic fact stated  in the proof of \cite[Theorem~5.4]{FI}.
\begin{prop}\cite[p. 319]{FI}\label{4-for}
Let $A$ be a 4-form and define the 3-forms $B_X=(X\lrcorner A)$ for any $X\in T_pM$. If  the 3-forms $B_X\in\Lambda^3_{27}$ then the four form $A$ vanishes identically, $A=0$
\end{prop}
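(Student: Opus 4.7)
My plan is to recast the condition ``$X\lrcorner A\in\Lambda^3_{27}$ for every $X\in T_pM$'' as the pair of wedge equations $(X\lrcorner A)\wedge\p = 0$ and $(X\lrcorner A)\wedge\ph = 0$ coming from the characterisation of $\Lambda^3_{27}$ in \eqref{dec2}, and then to exploit the Leibniz rule for the interior product together with the $G_2$-decomposition $\Lambda^4 = \Lambda^4_1\oplus\Lambda^4_7\oplus\Lambda^4_{27}$.

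First, I deal with the $\ph$-equation. Since $A\wedge\ph$ is an $8$-form in dimension $7$, it vanishes identically, so the Leibniz identity $X\lrcorner(A\wedge\ph) = (X\lrcorner A)\wedge\ph + A\wedge(X\lrcorner\ph) = 0$ forces $A\wedge(X\lrcorner\ph) = 0$ for every $X$. Using $\Lambda^3_7 = \{X\lrcorner\ph : X\in T_pM\}$ from \eqref{dec2} and the fact that, via the Hodge star, the wedge pairing $\Lambda^4\times\Lambda^3\to\Lambda^7$ is the metric pairing (for which the $G_2$-summands are mutually orthogonal), this is equivalent to $A\in(\Lambda^4_7)^{\perp} = \Lambda^4_1\oplus\Lambda^4_{27}$.

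Now the set $W$ of $A\in\Lambda^4$ satisfying the full hypothesis is a linear $G_2$-invariant subspace of $\Lambda^4_1\oplus\Lambda^4_{27}$. Since $\Lambda^4_1$ and $\Lambda^4_{27}$ are non-isomorphic irreducible $G_2$-modules, $W$ must be a direct sum of some subset of them, so it suffices to show that neither $\Lambda^4_1$ nor $\Lambda^4_{27}$ is contained in $W$. For $\Lambda^4_1=\mathbb R\p$ this is immediate: if $A = c\ph$, then $X\lrcorner(c\ph) = c(X\lrcorner\ph)\in\Lambda^3_7$, and $\Lambda^3_7\cap\Lambda^3_{27} = 0$ together with the injectivity of $X\mapsto X\lrcorner\ph$ force $c = 0$.

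For the $\Lambda^4_{27}$-summand I bring in the remaining $\p$-equation. For $A_{27}\in\Lambda^4_{27}$ one has $A_{27}\wedge\p = g(A_{27},\ph)\,\mathrm{vol} = 0$ by orthogonality of the $G_2$-summands, so Leibniz turns $(X\lrcorner A_{27})\wedge\p = 0$ into $A_{27}\wedge\omega = 0$ for every $\omega\in\Lambda^2_7 = \{X\lrcorner\p\}$. The set of such $A_{27}$ is a $G_2$-invariant subspace of the irreducible module $\Lambda^4_{27}$, hence is either $0$ or all of $\Lambda^4_{27}$; I rule out the second possibility by an explicit example. Taking $h = e_1\otimes e_1 - e_2\otimes e_2\in S^2_-$ and $A_{27} = *\gamma(h)\in\Lambda^4_{27}$ via the isomorphism \eqref{s12}, a direct wedge computation in the standard basis \eqref{g2} yields $A_{27}\wedge(e_1\lrcorner\p)\neq 0$. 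This non-triviality verification is the main technical obstacle; the rest of the argument is structural $G_2$-representation theory.
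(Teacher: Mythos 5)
Your proof is correct, but it takes a genuinely different route from the paper's. The paper argues entirely by index computation: it applies the inverse map $\gamma^{-1}$ of \eqref{s12} to $X\lrcorner A$, observes that the resulting tensor $C(X,Y,Z)=A(X,Y,e_i,e_j)\p(Z,e_i,e_j)$ is forced by the symmetry of $S^2_-$ in $(Y,Z)$ and the antisymmetry of $A$ in $(X,Y)$ to vanish, and then feeds $A_{pbij}\p_{ijs}=0$ through the contraction identities \eqref{iden} (in particular the heavy four-index $\ph\ph$-identity) to arrive at $4A=-4A$. You instead work with the wedge characterisation of $\Lambda^3_{27}$ from \eqref{dec2}, use the Leibniz rule for the interior product to convert the two conditions on $X\lrcorner A$ into the conditions $A\perp\Lambda^4_7$ and $A\wedge\Lambda^2_7=0$, and then invoke irreducibility and pairwise non-isomorphy of the summands of $\Lambda^4=\Lambda^4_1\oplus\Lambda^4_7\oplus\Lambda^4_{27}$ to reduce everything to one easy observation ($\Lambda^3_7\cap\Lambda^3_{27}=0$ plus injectivity of $X\mapsto X\lrcorner\ph$) and one explicit non-degeneracy check. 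The check you defer does come out as claimed: with $h=e_1\otimes e_1-e_2\otimes e_2$ one gets $\gamma(h)=e_{135}-e_{146}+e_{236}+e_{245}$, hence $A_{27}=*\gamma(h)=-e_{2467}+e_{2357}-e_{1457}-e_{1367}$, and $A_{27}\wedge(e_1\lrcorner\p)=A_{27}\wedge(e_{27}+e_{35}-e_{46})=2\,e_{234567}\neq0$, so the $G_2$-invariant subspace of $\Lambda^4_{27}$ you consider is indeed zero. What your approach buys is conceptual transparency: it isolates exactly where the content lies (the wedge pairing $\Lambda^4_{27}\times\Lambda^2_7\to\Lambda^6$ is not identically zero) and avoids the $\ph_{ijkl}\ph_{abcl}$ identity entirely; what the paper's version buys is a self-contained computation using only the identities \eqref{iden} already assembled for later use, with no appeal to representation theory. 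One cosmetic slip: $\Lambda^4_1=\mathbb R\,\ph$, not $\mathbb R\,\p$; your subsequent sentence treats the element as $c\ph$, which is the correct reading.
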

\begin{proof}
According to \eqref{s12} and \eqref{2s} the tensor
\begin{multline}\label{c1}
C(X,Y,Z)=4\gamma^{-1}(X\lrcorner A)(Y,Z)=A(X,Y,e_i,e_j)\p(Z,e_i,e_j)\\=C(X,Z,Y)=-C(Z,X,Y)=-C(Z,Y,X)=C(Y,Z,X)=C(Y,X,Z)=-C(X,Y,Z)
\end{multline}
and therefore vanishes. We  obtain from \eqref{c1} using  \eqref{iden}
\begin{equation}\label{ll1}
0=A_{pbij}\p_{ijs}\p_{kls}=A_{pbij}\big[\ph_{ijkl}+\delta_{ik}\delta_{jl}-\delta_{il}\delta_{jk}  \big]=A_{pbij}\ph_{ijkl}+2A_{pbkl}.
\end{equation}
The identity \eqref{ll1} together with \eqref{iden} yields
\begin{equation}\label{g222}
-2A_{pbkl}\p_{kls}=A_{pbij}\ph_{ijkl}\p_{kls}=4A_{pbij}\p_{ijs} \Longrightarrow A_{pbij}\p_{ijs}=0.
\end{equation}
The equalities \eqref{c1}, \eqref{ll1}, \eqref{g222} and \eqref{iden} imply
\begin{equation*}
4A_{pkac}=-2A_{pbkl}\ph_{abcl}=A_{pbij}\ph_{ijkl}\ph_{abcl}=
-4A_{pkac}
\end{equation*}
Hence $A=0$ which completes the proof of Proposition~\ref{4-for}.
\end{proof}

\begin{rmrk}\label{remconv}There is another different orientation convention for $G_2$ structures.
In the other convention, the eigenvalues of the operator $\beta\rightarrow *(\beta\wedge\p)$ are -2 and +1 instead of +2 and -1, respectively. 
\end{rmrk}

In~\cite{FG}, Fernandez and Gray divide $G_2$ manifolds into 16 classes
according to how the covariant derivative $\LC\p$ 
behaves with respect to its decomposition into $G_2$ irreducible components
(see also~\cite{CSal,GKMW,Br1}).  If the fundamental form is parallel with respect to
the Levi-Civita connection, $\LC\p=0$,
 then the Riemannian holonomy group is contained
in $G_2$. In this case the induced metric on the
$G_2$ manifold is Ricci-flat, a fact first observed by Bonan~\cite{Bo}.  It
was also shown in \cite{FG} that a $G_2$ manifold
is parallel precisely when the fundamental form is harmonic, i.e. $d\p=d*\p=0$.
The first examples of complete parallel $G_2$ manifolds were constructed by Bryant and
Salamon~\cite{BS,Gibb}.  Compact examples of parallel $G_2$ manifolds were
obtained first by Joyce~\cite{J1,J2,J3} and  with another construction by Kovalev~\cite{Kov}.

The Lee form $\theta$ is defined by \cite{Cabr} (see also \cite{Br})
\begin{equation}\label{g2li}
\theta=-\frac{1}{3}*(* d\p\wedge\p)=\frac13*(*d*\p\wedge *\p)=-\frac13*(\delta\p\wedge*\p)=-\frac13\delta\p\lrcorner\p,
\end{equation}
where $\delta=(-1)^k*d*$ is the codifferential  acting on $k$-forms  and one applies \eqref{star} to get the last identity.

The failure of the holonomy group of the Levi-Civita connection $\sb^g$ of the metric $g$ to reduce to $G_2$
can also be measured by the intrinsic torsion $\tau$, which is identified with $d\p$ an $d*\p=d\ph$, 
and can be decomposed into four basic classes \cite{CSal,Br1}, $\tau \in W_1\oplus W_7 \oplus W_{14}\oplus W_{27}$ which gives another description of the Fernandez-Gray classification \cite{FG}.  We list below those of them which
we will use later.

\noindent - $\tau \in W_1$. The class of nearly parallel  (weak holonomy) $G_2$ manifold defined by
$d\p=const.*\p, \quad d*\p=0$.

\noindent - $\tau \in W_7$. The class of locally conformally parallel $G_2$ spaces characterized by
$d*\p=\theta\wedge *\p, \quad d\p=\frac34\theta\wedge\p$.
 - $\tau \in W_{27}$. The class of pure integrable  $G_2$ manifolds determined by  $d\p\wedge\p=0$ and $d*\p=0$.

\noindent - $\tau \in W_1\oplus W_{27}$. The class of cocalibrated $G_2$ manifold, 
determined by the condition $d*\p=0$. 
 
\noindent - $\tau \in W_1\oplus W_7\oplus W_{27}$. The class of integrable $G_2$ manifold determined by the condition $d*\p=\theta\wedge *\p$.  An analog of the Dolbeault cohomology is investigated in \cite{FUg}.  In this class, the exterior derivative of the Lee form lies in the Lie algebra $\frak{g}_2$, $d\theta\in \Lambda^2_{14}$ \cite{Kar1}. 
This is the  class which we are interested in.

 \noindent- $\tau \in W_7\oplus W_{27}$. This class is determined by the conditions $d\p\wedge\p=0$ and $d*\p=\theta\wedge*\p$ and is of great interest in supersymmetric heterotic string theories in dimension seven  \cite{GKMW,FI,FI1,GMW,GMPW,OLS}. We call this class \emph{strictly}  integrable   $G_2$ manifolds .

An important sub-class of the integrable $G_2$ manifolds is determined in the next
\begin{dfn}\label{ctype}
An integrable $G_2$ structure is said to be of constant type if the function $(d\p,*\p)=const.$.
\end{dfn}
For example, the nearly parallel as well as the strictly integrable $G_2$ manifolds are integrable of constant type. The integrable $G_2$ manifolds of constant type appear also in  the $G_2$ heterotic supergravity where  the constant  $(d\p,*\p)$ is interpreted as the AdS radius \cite{Oss1,Oss2} see also  \cite[Section~5.2.1]{AMP}.

If the Lee form of an integrable $G_2$ structure  vanishes, $\theta=0$ then the $G_2$ structure is
co-calibrated. If the Lee form of an integrable $G_2$ structure is closed,
$d\theta=0$ then the $G_2$ structure is locally conformally
equivalent to a co-calibrated one \cite{FI1} (see also \cite{Kar1}) and if the Lee form is an exact form then it is (globally) conformal to a co-calibrated one.  
It is known due to  \cite[Theorem~3.1]{FI1} that for any  integrable $G_2$ structure  on a compact manifold, there exists a unique integrable $G_2$ structure conformal to the original one with co-closed Lee form,  called \emph{the Gauduchon $G_2$ structure}.

We recall the following
\begin{dfn}
The curvature $R$ of a linear connection on a $G_2$ manifold  is a $G_2$-instanton if  the curvature 2-form lies in the Lie algebra $\frak{g}_2\cong\Lambda^2_{14}$. This is equivalent to  the  identities:
\begin{equation}\label{rri}
R_{abij}\p_{abk}=0 \Longleftrightarrow R_{abij}\ph_{abkl}=-2R_{klij}.
\end{equation}
\end{dfn}
\section{The $G_2$-connection with skew-symmetric torsion}
The necessary and sufficient  conditions for  a 7-dimensional manifold with a $G_2$ structure to admit a metric connection with torsion 3-form preserving the $G_2$ structure are found in \cite{FI} ( see also \cite{GKMW,FI1,GMW,GMPW}). 
\begin{thrm}\cite[Theorem~4.8]{FI}\label{cythm1}
Let $(M,\p)$ be a  smooth  manifold with a $G_2$ structure $\p$.

The next two conditions are equivalent
\begin{enumerate}
\item[a)] The $G_2$ structure $\p$ is integrable,
\begin{equation}\label{cycon}
\begin{split}
d*\p=\theta\wedge *\p.
\end{split}
\end{equation}
\item[b)] There exists a unique $G_2$-connection $\sb$ with torsion 3-form preserving the $G_2$ structure,

$
\sb g=\sb\p=\sb\ph=0.
$
The torsion of $\sb$ is given by
\begin{equation}\label{torcy}
\begin{split}
T=-*d\p +*(\theta\wedge\p)+\frac{1}{6}(d\p,*\p)\p.
\end{split}
\end{equation}
\end{enumerate}
\end{thrm}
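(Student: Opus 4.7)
The plan is to pivot on the identity \eqref{tsym}, which puts metric connections with totally skew-symmetric torsion in bijection with 3-forms $T$ via $\sb = \LC + \frac{1}{2}T$. Under this identification, $\sb_X\p = \LC_X\p + \frac{1}{2}(T_X\cdot\p)$, where $T_X\cdot\p$ denotes the natural derivation action of the 2-form $T(X,\cdot,\cdot)$ on $\p$, and the $G_2$-parallelism $\sb\p = 0$ becomes the pointwise linear equation
\begin{equation*}
2\LC_X\p = -\,T_X\cdot\p \qquad \text{for every } X.
\end{equation*}
Since $G_2$ is by definition the stabilizer of $\p$, the kernel of the action map $\Lambda^2\ni\alpha\mapsto\alpha\cdot\p\in\Lambda^3$ is exactly $\Lambda^2_{14}\cong g_2$. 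Consequently, this algebraic equation is solvable in $T$ if and only if, for each $X$, $\LC_X\p$ lies in the image of the map, equivalently the $\Lambda^2_{14}$-part of the intrinsic torsion vanishes.

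To translate this algebraic obstruction into the differential condition \eqref{cycon}, I would use the standard identity $d(*\p) = \sum_i e_i \wedge \LC_{e_i}(*\p)$ together with $\LC_X(*\p) = *(\LC_X\p)$, valid because $*$ is parallel with respect to $\LC$. Expanding and projecting onto the $G_2$-irreducible pieces via the contraction identities \eqref{iden} and the $*$-identities \eqref{star}, one verifies that the $\Lambda^2_{14}$-obstruction in the intrinsic torsion is encoded precisely in the deviation $d*\p - \theta\wedge*\p$, with $\theta$ as in \eqref{g2li}. Therefore \eqref{cycon} is equivalent to the existence of a 3-form $T$ solving the pointwise equation above, proving $(a)\Leftrightarrow(b)$.

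For uniqueness under \eqref{cycon}, the difference $S = T - T'$ of any two admissible torsions satisfies $(X\lrcorner S)\cdot\p = 0$ for every $X$, so each interior contraction lies in $\Lambda^2_{14}$; applying the identity $\p_{ijk}\p_{abk} = \delta_{ia}\delta_{jb}-\delta_{ib}\delta_{ja}+\ph_{ijab}$ from \eqref{iden} to the 3-form $S$, in the spirit of Proposition~\ref{4-for}, forces $S = 0$. For the explicit formula \eqref{torcy}, I would verify directly that the candidate $T = -*d\p + *(\theta\wedge\p) + \frac{1}{6}(d\p,*\p)\p$ solves the pointwise equation. The three summands are chosen to land in $\Lambda^3_{27}$, $\Lambda^3_7$, and $\Lambda^3_1$ respectively, so that their combined action on $\p$ reproduces $\LC\p$ sector-by-sector in the $W_1\oplus W_7\oplus W_{27}$ decomposition of the intrinsic torsion.

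The hard part will be this last verification: one must check, irreducible component by irreducible component, that $T_X\cdot\p = -2\LC_X\p$ for the explicit $T$ above. This is a concrete computation but it leans heavily on the contraction formulas \eqref{iden}, in particular the quartic identity for $\ph_{ijkl}\ph_{abcl}$, which mixes all of the $G_2$-representations in a single expression. Once this matching is established, the formula \eqref{torcy} and uniqueness follow simultaneously, and the necessity direction $(b)\Rightarrow(a)$ is recovered by running the same computation in reverse: the existence of $T$ forces the $\Lambda^2_{14}$-part of $d*\p - \theta\wedge*\p$ to vanish, which is exactly \eqref{cycon}.
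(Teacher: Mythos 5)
This theorem is quoted by the paper from \cite[Theorem~4.8]{FI} and is not proved in the text, so there is no internal proof to compare against; I will therefore measure your proposal against the standard argument. Your overall architecture is the right one (reduce $\sb\p=0$ to the pointwise equation $2\LC_X\p=-(X\lrcorner T)\cdot\p$, identify the obstruction with a torsion class, then verify the explicit formula), but the central step is justified by a criterion that is actually vacuous. Since $\LC g=0$, the form $\p$ is parallel-transported within its $SO(7)$-orbit, so $\LC_X\p$ lies in the tangent space $\mathfrak{so}(7)\cdot\p=\Lambda^2\cdot\p=\Lambda^2_7\cdot\p$ of that orbit for \emph{every} $G_2$-structure and every $X$. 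Hence the condition you state --- that $\LC_X\p$ lie in the image of the action map $\Lambda^2\to\Lambda^3$ for each $X$ --- holds unconditionally and cannot detect integrability. The true obstruction is not pointwise in $X$: for each $X$ one can always solve $\alpha_X\cdot\p=-2\LC_X\p$ with $\alpha_X\in\Lambda^2$, determined only modulo the kernel $\Lambda^2_{14}\cong g_2$; the question is whether the family $X\mapsto\alpha_X$ can be corrected by sections of $\Lambda^1\otimes\Lambda^2_{14}$ so as to assemble into a single \emph{totally skew-symmetric} tensor $T\in\Lambda^3\subset\Lambda^1\otimes\Lambda^2$. Comparing the decomposition $\Lambda^1\otimes\Lambda^2_7\cong W_1\oplus W_7\oplus W_{14}\oplus W_{27}$ with $\Lambda^3\cong W_1\oplus W_7\oplus W_{27}$, the obstruction is exactly the $W_{14}$-component of the intrinsic torsion, which one then identifies with the $\Lambda^2_{14}$-part $\tau_2$ of $d*\p$ via $d*\p=\theta\wedge*\p+\tau_2\wedge\p$; its vanishing is \eqref{cycon}. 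This global skew-symmetrization step is the heart of the theorem and is missing from your argument.

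Two smaller points. Your uniqueness argument is sound in outcome: a 3-form $S$ with $X\lrcorner S\in\Lambda^2_{14}$ for all $X$ satisfies $S_{iab}\p_{abc}=0$ and $S_{iab}\ph_{abkl}=-2S_{ikl}$, and contracting with the identities \eqref{iden} (in the spirit of Proposition~\ref{4-for}) forces $S=0$; this is exactly the statement $\Lambda^3\cap(\Lambda^1\otimes\Lambda^2_{14})=0$ that makes the characteristic connection unique. However, your claim that the three summands of \eqref{torcy} lie in $\Lambda^3_{27}$, $\Lambda^3_7$ and $\Lambda^3_1$ respectively is not correct: $d\p$ has components in $\Lambda^4_1\oplus\Lambda^4_7\oplus\Lambda^4_{27}$, so $*d\p$ is not pure; the genuinely type-decomposed expression is \eqref{torcy3}, $T=-(\delta\ph)^3_{27}-\frac14\theta\lrcorner\ph+\frac17\lambda\p$, and the final verification should be organized around that decomposition rather than the one you propose.
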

The unique linear connection $\sb$  preserving the $G_2$ structure with totally skew-symmetric torsion is called \emph{the characteristic connection}.  The curvature and the Ricci tensor of $\sb$  will be called \emph{characteristic curvature} and \emph{characteristic Ricci tensor}, respectively.

If the $G_2$ structure is nearly parallel then  the torsion is parallel with respect to the characteristic connection, $\sb T=0$  \cite{FI}.

We recall that the $G_2$-Hull connection $\sb^h$ is defined to be the metric connection with torsion $-T$, where $T$ is the torsion of the characteristic connection,
\begin{equation}\label{hu}\sb^h=\LC-\frac12T=\sb-T.
\end{equation}

\subsection{The torsion and the Ricci tensor of the characteristic connection}
We obtain from \eqref{torcy} using \eqref{star} that
\begin{equation}\label{torcy1}
\begin{split}
T=-*d\p +*(\theta\wedge\p)+\frac{1}{6}(d\p,\ph)\p 
=-*d*\Phi-\theta\lrcorner\Phi+\frac{1}{6}(d\p,\ph)\p
=-\delta\Phi-\theta\lrcorner\Phi+\frac{1}{6}(d\p,\ph)\p.
\end{split}
\end{equation}
Write $\delta\ph$  in terms $\LC$  and then in terms of $\sb$ using \eqref{tsym} and  $\sb\Phi=0$ to get 
\begin{equation}\label{dphi}
\begin{split}
-\delta\Phi_{klm}=\sb^g_j\Phi_{jklm}=\sb_j\Phi_{jklm}-\frac12T_{jsk}\Phi_{jslm}+\frac12T_{jsl}\Phi_{jskm}-\frac12T_{jsm}\Phi_{jskl}\\
=-\frac12T_{jsk}\Phi_{jslm}+\frac12T_{jsl}\Phi_{jskm}-\frac12T_{jsm}\Phi_{jskl}.
\end{split}
\end{equation}
Substituting \eqref{dphi} into \eqref{torcy1}, we obtain  the following formula of the 3-form torsion $T$,
\begin{equation}\label{torcy2}
T_{klm}=-\frac12T_{jsk}\Phi_{jslm}+\frac12T_{jsl}\Phi_{jskm}-\frac12T_{jsm}\Phi_{jskl}-\theta_s\Phi_{sklm}+\lambda\p_{klm}, 
\end{equation}
where the function $\lambda$ is defined by the  scalar product
\begin{equation}\label{lm}
\lambda =\frac16(d\p,\ph)=\frac1{42}d\p_{ijkl}\ph_{ijkl}=\frac1{36}\delta\ph_{klm}\p_{klm}.
\end{equation}
Applying \eqref{iden},  it is easy to check from \eqref{g2li},  \eqref{torcy2} and \eqref{dphi} that the Lee form $\theta$ can be written in the form 
\begin{equation}\label{tit}
\begin{split}
\theta_i=\frac16T_{jkl}\Phi_{jkli}=\frac16T_{jkl}\p_{jks}\p_{lis}=\frac1{18}\delta\ph_{jkl}\ph_{jkli}.
\end{split}
\end{equation}
We calculate from \eqref{torcy2} the function $\lambda$  in terms of the torsion $T$ using \eqref{iden} as follows
\begin{equation}\label{lmt}
T_{klm}\p_{klm}=-\frac32T_{jsk}\Phi_{jslm}\p_{klm}+42\lambda=-6T_{jsk}\p_{jsk}+42\lambda \Longrightarrow \lambda=\frac16T_{klm}\p_{klm}
\end{equation}
Similarly, we obtain the next identities
\begin{equation}\label{lmt0}
\begin{split}
T_{kli}\p_{klj}-T_{klj}\p_{kli}=-2\theta_s\p_{sij}.\\
\sigma^T_{iabc}\p_{abc}=-3T_{abs}\p_{abc}T_{sci}=3\theta_s\p_{skt}T_{kti},
\end{split}
\end{equation}
where $\sigma^T$ is defined in \eqref{sigma}.

For the $\Lambda^3_{27}$ component $(\delta\ph)^3_{27}$ of $\delta\ph$ we get  taking into account  \eqref{dphi}, \eqref{iden}, \eqref{lm} and \eqref{tit} that 
\begin{equation}\label{327}
(\delta\ph)^3_{27}=\delta\ph+\frac34\theta\lrcorner\ph-\frac67\lambda\p
\end{equation}
because we calculated $(\delta\ph)^1_{27}=\frac67\lambda\p$ and $(\delta\ph)^7_{27}=-\frac34\theta\lrcorner\ph$.

The equalities \eqref{327} and \eqref{torcy1} yield the next formulas for the 3-form torsion $T$  and its norm $||T||^2$, \cite{FI},
\begin{equation}\label{torcy3}
\begin{split}
T=-(\delta\ph)^3_{27}-\frac14\theta\lrcorner\ph+\frac17\lambda\p=-(\delta\ph)^3_{27}-\frac14\theta\lrcorner\ph+\frac1{42}(d\p,\ph)\p,
\\ ||T||^2= 
||(\delta\ph)^3_{27}||^2+\frac{3}2||\theta||^2+\frac1{42}(d\p,\ph)^2=||\delta\ph||^2-12||\theta||^2-\frac56(d\p,*\p)^2.
\end{split}
\end{equation}
\subsection{The characteristic Ricci tensor}
Studying the properties of the characteristic Ricci tensor, we have
\begin{thrm}\label{maing2}
On an integrable     $G_2$ manifold $(M,\p)$ the next conditions are equivalent:
\begin{itemize}
\item[a)] The characteristic Ricci tensor  is symmetric, $Ric(X,Y)=Ric(Y,X)$;
\item[b)] The two form $d^{\sb}\theta(X,Y)=(\sb_X\theta)Y-(\sb_Y\theta)X$ belongs to $\Lambda^2_7$ and it is given by
 \begin{equation}\label{newsymth}
d^{\sb}\theta=d\lambda\lrcorner\p, \quad \Leftrightarrow 
 \sb_X(d\p,*\p)=d^{\sb}\theta(e_a,e_b)\p(X,e_a,e_b)=6\sb_X\lambda.
 \end{equation}
 \end{itemize}
\end{thrm}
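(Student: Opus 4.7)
My plan is to reduce the statement to the computation of $\delta T$ and then perform that computation using the explicit formula \eqref{torcy1} together with $\delta^2 = 0$. By \eqref{rics}, $Ric(X,Y) - Ric(Y,X) = -(\delta T)(X,Y)$, so condition (a) is equivalent to the vanishing of the 2-form $\delta T$. Since $d\lambda \lrcorner \varphi$ automatically lies in $\Lambda^2_7 = \{\alpha \lrcorner \varphi : \alpha \in \Lambda^1\}$, the whole theorem reduces to establishing the identity
\[
\delta T = d^{\nabla}\theta - d\lambda \lrcorner \varphi,
\]
from which the contracted formulation in (b) follows instantly via $\varphi_{cab}\varphi_{Xab} = 6\delta_{cX}$ from \eqref{iden} and $(d\varphi,*\varphi) = 6\lambda$ from \eqref{lm}.

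To compute $\delta T$, I would apply $\delta$ to \eqref{torcy1}, $T = -\delta\Phi - \theta\lrcorner\Phi + \lambda\varphi$. The term $\delta(\delta\Phi)$ vanishes by $\delta^2 = 0$, leaving $\delta T = \delta(\lambda\varphi) - \delta(\theta\lrcorner\Phi)$. For the first piece, the Leibniz rule $\delta(f\omega) = f\delta\omega - df\lrcorner\omega$ combined with the easy computation $\delta\varphi = -\theta\lrcorner\varphi$ (using $d\Phi = \theta\wedge\Phi$ and the star-identities \eqref{star}) gives $\delta(\lambda\varphi) = -\lambda\,\theta\lrcorner\varphi - d\lambda\lrcorner\varphi$. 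For the second piece, $*(\theta\lrcorner\Phi) = -\theta\wedge\varphi$ and the sign of $\delta$ on 3-forms yield $\delta(\theta\lrcorner\Phi) = *(d\theta\wedge\varphi) - *(\theta\wedge d\varphi)$. Now $d^2\Phi = 0$ applied to $d\Phi = \theta\wedge\Phi$ forces $d\theta\wedge\Phi = 0$, equivalently $d\theta \in \Lambda^2_{14}$, so by \eqref{dec2} we have $*(d\theta\wedge\varphi) = -d\theta$; and $*(\theta\wedge d\varphi) = \theta\lrcorner *d\varphi$ by \eqref{star}.

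The key structural observation is that $\delta\Phi = *d\varphi$ (from $\Phi = *\varphi$ and the sign of $\delta$ on 4-forms in dimension 7), so \eqref{torcy1} rearranges to $T + *d\varphi = -\theta\lrcorner\Phi + \lambda\varphi$. Combining this with the elementary relation $d\theta = d^{\nabla}\theta + \theta\lrcorner T$ (obtained from $\nabla^g = \nabla - \tfrac{1}{2}T$ applied to a 1-form), the partial computations group as
\[
\delta T = d^{\nabla}\theta + \theta\lrcorner(T + *d\varphi) - \lambda\theta\lrcorner\varphi - d\lambda\lrcorner\varphi = d^{\nabla}\theta - \theta\lrcorner(\theta\lrcorner\Phi) + \lambda\theta\lrcorner\varphi - \lambda\theta\lrcorner\varphi - d\lambda\lrcorner\varphi,
\]
and the two cancellations $\theta\lrcorner(\theta\lrcorner\Phi) = 0$ (by antisymmetry of $\Phi$) and $\lambda\theta\lrcorner\varphi - \lambda\theta\lrcorner\varphi = 0$ deliver the desired identity $\delta T = d^{\nabla}\theta - d\lambda\lrcorner\varphi$. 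The main obstacle is spotting that $T + *d\varphi$ simplifies via the formula for $T$ itself; without this regrouping, the $\theta\lrcorner *d\varphi$ term would remain a stubborn obstruction and the expression would not collapse to the clean form we want.
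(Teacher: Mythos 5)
Your proposal is correct and follows essentially the same route as the paper: both reduce condition a) to $\delta T=0$ via \eqref{rics} and then establish the identity $\delta T=d^{\sb}\theta-d\lambda\lrcorner\p$ by expanding the codifferential of the torsion formula, using $d\theta\in\Lambda^2_{14}$, the back-substitution of \eqref{torcy1} to eliminate $\theta\lrcorner *d\p=\theta\lrcorner\delta\ph$, and the relation $d\theta=d^{\sb}\theta+\theta\lrcorner T$. The only cosmetic difference is that you organize the computation as $\delta$ applied term-by-term to $T=-\delta\Phi-\theta\lrcorner\Phi+\lambda\p$ (killing the first term by $\delta^2=0$) and derive $d\theta\wedge\Phi=0$ from $d^2\Phi=0$ rather than citing it, whereas the paper computes $*d*T$ directly from \eqref{torcy}; the resulting terms and cancellations are identical.
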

\begin{proof}
We calculate from \eqref{torcy} using \eqref{star}, \eqref{cycon},  \eqref{dec2} and the fact obseved in \cite{Kar1} that $d\theta\in\Lambda^2_{14}$ 
\begin{multline}\label{rrr}
-\delta T=*d*T=*(d\theta\wedge\p)-*(\theta\wedge d\p)+*(d\lambda\wedge\ph) +*(\lambda\theta\wedge\ph)\\=-d\theta-*(\theta\wedge d\p)+*[d\lambda+\lambda\theta)\wedge\ph]=-d\theta-\theta\lrcorner*d\p+(d\lambda+\lambda\theta)\lrcorner\p\\=-d\theta-\theta\lrcorner\delta\ph+(d\lambda+\lambda\theta)\lrcorner\p=-d\theta+\theta\lrcorner T-\lambda\theta\lrcorner\p+(d\lambda+\lambda\theta)\lrcorner\p=-d\theta+\theta\lrcorner T+d\lambda\lrcorner\p, 
\end{multline}
where we have applied \eqref{torcy1} in the third line.

On the other hand, \eqref{tsym} yields
\begin{equation}\label{ttg2}
d\theta=d^{\sb}\theta+\theta\lrcorner T,
\end{equation}
 which substituted into \eqref{rrr} gives
\begin{equation}\label{deltaT}
\delta T=d^{\sb}\theta-d\lambda\lrcorner\p.
\end{equation}
Apply \eqref{rics} to \eqref{deltaT} to achieve the equivalence of a) and b). The proof is completed.
\end{proof}
We  obtain from \eqref{ttg2}, \eqref{deltaT} and $d\theta\in\Lambda^2_{14}$ that
\begin{equation}\label{nnew1}
 -\delta T_{bc}\p_{bci}=\sb_sT_{sbc}\p_{bci}=\theta_sT_{skt}\p_{kti}+6d\lambda_i.
\end{equation}
As straightforward consequences of Theorem~\ref{maing2}, we have
\begin{prop}\label{symsonst}
An integrable $G_2$ manifold with symmetric characteristic Ricci tensor   is of constant type, $6\lambda=(d\p,*\p)=const$, if and only if $\sb\theta$ is symmetric, $d^{\sb}\theta=0$.

A co-calibrated  $G_2$ structure, $d*\p=0$, has symmetric characteristic Rici tensor  if and only if the co-calibrated $G_2$ structure is of constant type, $6\lambda=(d\p,*\p)=const.$

A strictly integrable $G_2$ structure, $d*\p=\theta\wedge*\p, \quad d\p\wedge\p=0$ has symmetric characteristic Ricci tensor if and only if $\sb\theta$ is symmetric.
\end{prop}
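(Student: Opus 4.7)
The plan is to reduce all three assertions to Theorem~\ref{maing2}, which states that the symmetry of $Ric$ is equivalent to the identity $d^{\sb}\theta = d\lambda\lrcorner\p$. The underlying algebraic fact I will use throughout is that contraction with $\p$ defines an isomorphism $\Lambda^1\xrightarrow{\cong}\Lambda^2_7$, which is immediate from $\p_{ijk}\p_{ajk}=6\delta_{ia}$ recorded in \eqref{iden}; in particular $\alpha\lrcorner\p=0$ forces $\alpha=0$.

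For the first assertion, if $Ric$ is symmetric then Theorem~\ref{maing2} gives $d^{\sb}\theta=d\lambda\lrcorner\p$. The constant type condition $6\lambda=(d\p,*\p)=\mathrm{const}$ is exactly $d\lambda=0$, which by the above isomorphism is equivalent to $d\lambda\lrcorner\p=0$, hence to $d^{\sb}\theta=0$. For the second assertion, I would combine the co-calibrated hypothesis $d*\p=0$ with the integrability equation $d*\p=\theta\wedge *\p$ to obtain $\theta\wedge *\p=0$; applying the identities \eqref{1star} with $\alpha=\theta$ and $\beta=\p$ gives $\theta\lrcorner\p=\pm *(\theta\wedge *\p)=0$, and then the $\Lambda^1\cong\Lambda^2_7$ isomorphism forces $\theta=0$. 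Consequently $\sb\theta=0$ automatically, so $d^{\sb}\theta=0$, and the first assertion specializes to the equivalence between symmetry of $Ric$ and constant type.

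For the third assertion, I would translate the strict integrability condition $d\p\wedge\p=0$ via the pointwise identity $(d\p,*\p)\,\mathrm{vol}=d\p\wedge **\p=d\p\wedge\p$ in dimension seven (using $**=\mathrm{id}$), which gives $(d\p,*\p)=0$, hence $\lambda\equiv 0$ and $d\lambda=0$. Theorem~\ref{maing2} then reduces the symmetry of $Ric$ precisely to $d^{\sb}\theta=0$, i.e.\ to symmetry of $\sb\theta$. The main obstacle is purely bookkeeping of Hodge-star conventions, namely verifying that $\theta\wedge *\p=0$ really implies $\theta=0$ in the co-calibrated case and that $d\p\wedge\p=0$ really forces $\lambda=0$; once those two identifications are in place, each statement is a one-line consequence of Theorem~\ref{maing2}.
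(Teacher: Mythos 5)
Your proposal is correct and follows exactly the route the paper intends: the paper presents this Proposition as a ``straightforward consequence'' of Theorem~\ref{maing2} without writing out the details, and your argument supplies precisely those details — the injectivity of $\alpha\mapsto\alpha\lrcorner\p$ from $\p_{ijk}\p_{ajk}=6\delta_{ia}$, the vanishing of $\theta$ in the co-calibrated case, and $(d\p,*\p)=0$ in the strictly integrable case. All three reductions are sound.
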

Note that the second statement in the above proposition is observed   in \cite{FMR}.

\noindent On a locally conformally parallel $G_2$ manifold, $\tau \in W_7$,  \eqref{torcy} reads $T=\frac13*d\p$ yielding to $\delta T=0$ and
\begin{cor}
The characteristic Ricci tensor of a locally conformally parallel $G_2$ manifold is symmetric.
\end{cor}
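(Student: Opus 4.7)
The corollary is essentially a direct application of Theorem~\ref{maing2} (or, more directly, the antisymmetric part of \eqref{rics}), so the plan reduces to verifying that the characteristic torsion of a $W_7$ structure is co-closed. The starting point is the defining relations of the $W_7$ class recalled in the classification list: $d\p=\tfrac34\theta\wedge\p$ and $d*\p=\theta\wedge*\p$. The first thing I would do is compute $\lambda$ from \eqref{lm}: since $d\p=\tfrac34\theta\wedge\p$ lies in $\Lambda^4_7$ while $\ph=*\p$ spans $\Lambda^4_1$, the two are orthogonal as $G_2$-subrepresentations, so $(d\p,\ph)=0$ and hence $\lambda=0$.

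Next I would substitute these two facts into the torsion formula \eqref{torcy}:
\begin{equation*}
T=-*d\p+*(\theta\wedge\p)+\tfrac{1}{6}(d\p,*\p)\p=-\tfrac34*(\theta\wedge\p)+*(\theta\wedge\p)=\tfrac14*(\theta\wedge\p).
\end{equation*}
Using once more $\theta\wedge\p=\tfrac43 d\p$, this simplifies to the expression stated in the paragraph preceding the corollary, $T=\tfrac13 *d\p$.

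The co-closedness of $T$ is then immediate: since $*^2=1$ on $4$-forms in dimension seven, $*T=\tfrac13 d\p$, so
\begin{equation*}
d*T=\tfrac13\,d(d\p)=0,
\end{equation*}
and therefore $\delta T=\pm *d*T=0$. The conclusion now follows by reading the skew part of \eqref{rics}, namely $Ric(X,Y)-Ric(Y,X)=-(\delta T)(X,Y)$, or equivalently by the equivalence (a)$\Leftrightarrow$(b) of Theorem~\ref{maing2}. There is no real obstacle here; the only point that deserves a line of verification is the vanishing of $(d\p,*\p)$, for which the orthogonality of the $G_2$-irreducible components $\Lambda^4_7$ and $\Lambda^4_1$ suffices.
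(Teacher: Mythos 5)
Your proposal is correct and follows essentially the same route as the paper: the paper likewise notes that for $\tau\in W_7$ the torsion formula \eqref{torcy} collapses to $T=\frac13*d\p$, whence $\delta T=0$ and the symmetry of $Ric$ follows from the skew part of \eqref{rics}. You have merely filled in the (correct) intermediate steps — the vanishing of $(d\p,*\p)$ by orthogonality of $\Lambda^4_7$ and $\Lambda^4_1$, and the simplification $-\frac34*(\theta\wedge\p)+*(\theta\wedge\p)=\frac14*(\theta\wedge\p)=\frac13*d\p$ — that the paper leaves implicit.
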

The structure of compact locally conformally parallel $G_2$ manifolds is described in \cite{IPP}.

Explicit formulas of the characteristic Ricci tensor of an integrable $G_2$ manifold are presented in \cite{FI,FI1}. Below, we give another proof of this fact for completeness. We have
\begin{thrm}\cite{FI, FI1}\label{thRic}
The characteristic Ricci tensor $Ric$ and its scalar curvature $Scal$  are given by
\begin{equation}\label{ricg2}
\begin{split}
Ric_{ij}=\frac1{12}dT_{iabc}\ph_{jabc}-\sb_i\theta_j;\\
Scal=3\delta\theta+2||\theta||^2-\frac13||T||^2+\frac1{18}(d\p,\ph)^2
=3\delta\theta+6||\theta||^2-\frac13||\delta\ph||^2+\frac13(d\p,\ph)^2.
\end{split}
\end{equation}
The Riemannian scalar curvature $Scal^g$ of an integrable $G_2$ manifold is given by
\begin{equation}\label{scal1}
\begin{split}
Scal^g=3\delta\theta+2||\theta||^2-\frac1{12}||T||^2+\frac1{18}(d\p,\ph)^2
=3\delta\theta+3||\theta||^2-\frac1{12}||\delta\ph||^2+\frac5{36}(d\p,\ph)^2;
\end{split}
\end{equation}
The next equivalent identities hold
\begin{equation}\label{dtt}
\begin{split}
dT_{iabc}\p_{abc}+2\sb_iT_{abc}\p_{abc}=
dT_{iabc}\p_{abc}+12d\lambda_i=0;\\
 3\sb_aT_{bci}\p_{abc}-2\sigma^T_{iabc}\p_{abc}-18d\lambda_i=0;\\
\sb_aT_{bci}\p_{abc}-2\theta_sT_{skt}\p_{kti}-6d\lambda_i=0.
\end{split}
\end{equation}
\end{thrm}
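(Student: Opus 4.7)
The plan is to derive the four claims in sequence from the first Bianchi identity \eqref{1bi1} contracted with either $\ph$ or $\p$, combined with the $G_2$-holonomy reduction of $\sb$.

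For the Ricci formula, I would contract \eqref{1bi1} with $\ph_{jabc}$ (setting $V=e_i$ and $(X,Y,Z)=(e_a,e_b,e_c)$). By total antisymmetry of $\ph_{jabc}$ in the summed indices, the three cyclic $R$-terms on the left coincide, so the left side equals $3R_{iabc}\ph_{jabc}$. The key ingredient is that $\sb\p=\sb\ph=0$ forces the curvature 2-form $R(X,Y)$ (in its last two slots) into $g_2\subset\Lambda^2_{14}$, characterized by $\phi_{ab}\ph_{abkl}=-2\phi_{kl}$. Rewriting $\ph_{jabc}=\ph_{bcja}$ (an even permutation), this gives $R_{iabc}\ph_{jabc}=-2R_{ia,ja}=2Ric_{ij}$, so the left side becomes $6Ric_{ij}$. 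On the right, $dT_{abci}=-dT_{iabc}$, and differentiating \eqref{tit} with $\sb\ph=0$ yields $\sb_iT_{abc}\ph_{jabc}=-6\sb_i\theta_j$; dividing by $6$ produces the stated formula.

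For the identity \eqref{dtt}, I would run the same contraction but with $\p_{abc}$ in place of $\ph_{jabc}$. Cyclic symmetry again collapses the left side to $3R_{iabc}\p_{abc}$. This time the $G_2$-condition forces the left side to vanish: the 2-form $R_{ia,\cdot\cdot}$ lies in $\Lambda^2_{14}$ while $\p_{a\cdot\cdot}=e_a\lrcorner\p$ lies in $\Lambda^2_7$, so the orthogonal decomposition of $\Lambda^2$ gives $R_{ia,bc}\p_{abc}=0$ for each $i,a$. The right side, using \eqref{lmt} and $\sb\p=0$, simplifies to $\tfrac12 dT_{iabc}\p_{abc}+\sb_iT_{abc}\p_{abc}$, and equating to zero is exactly \eqref{dtt}.

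For the scalar curvature $Scal$, I would take the trace of the Ricci formula. One checks $\sb_i\theta_i=-\delta\theta$ immediately (the $T$-correction vanishes by antisymmetry of $T$), leaving $Scal=\tfrac1{12}dT_{iabc}\ph_{iabc}+\delta\theta$. Expanding $dT$ via \eqref{dh} and noting that each of the four $\sb T$-terms contracts with the totally antisymmetric $\ph$ to give $\sb_iT_{abc}\ph_{iabc}=6\delta\theta$, one obtains $dT_{iabc}\ph_{iabc}=24\delta\theta+2\sigma^T_{iabc}\ph_{iabc}$ and therefore $Scal=3\delta\theta+\tfrac16\sigma^T_{iabc}\ph_{iabc}$. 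The main obstacle is the purely algebraic identity $\sigma^T_{iabc}\ph_{iabc}=12||\theta||^2-2||T||^2+\tfrac13(d\p,\ph)^2$. By antisymmetry of $\ph$ the three terms in \eqref{sigma} collapse to $\sigma^T_{iabc}\ph_{iabc}=3T_{aij}T_{akl}\ph_{ijkl}$; the plan is to substitute the $G_2$-decomposition $T=T_1\oplus T_7\oplus T_{27}$ read off from \eqref{torcy3} and reduce via the contraction identities \eqref{iden}. The Riemannian scalar curvature then follows from \eqref{rics} via $Scal^g=Scal+\tfrac14||T||^2$, and the alternative expressions for both scalars follow by substituting the expansion of $||T||^2$ from \eqref{torcy3}.
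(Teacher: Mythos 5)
Your proposal is correct and follows essentially the same route as the paper's own proof: contracting the Bianchi identity \eqref{1bi1} with $\ph$ and with $\p$, using the $g_2$-curvature conditions \eqref{rr} together with \eqref{tit} and \eqref{dh}, evaluating $\sigma^T_{iabc}\ph_{iabc}$ from the structure equation \eqref{torcy2} for $T$ (the paper's equation \eqref{ng4}, which yields exactly your claimed $-2||T||^2+12||\theta||^2+\tfrac13(d\p,\ph)^2$), and passing to $Scal^g$ via \eqref{rics} and \eqref{torcy3}.
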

\begin{proof}
 Since $\sb\p=\sb\ph=0$ the curvature of the characteristic connection lies in the Lie algebra $\frak{g}_2$, i.e. 
\begin{equation}\label{rr}
\begin{split}
R(X,Y,e_i,e_j)\p(e_i,e_j,Z)=0 \Longleftrightarrow R(X,Y,e_i,e_j)\ph(e_i,e_j,Z,V)=-2R(X,Y,Z,V).\\
R_{ijab}\p_{abk}=0 \Longleftrightarrow R_{ijab}\ph_{abkl}=-2R_{ijkl}.
\end{split}
\end{equation}
We have from \eqref{rr} using \eqref{1bi1}, \eqref{tit} and \eqref{dh}  that the Ricci tensor $Ric$ of  $\sb$ is given by
\begin{multline}\label{ricdt}
2Ric_{ij}=R_{iabc}\ph_{jabc}=\frac13\big[R_{iabc}+R_{ibca}+R_{icab} \big]\ph_{jabc}=\frac16dT_{iabc}\ph_{jabc}+\frac13\sb_iT_{abc}\ph_{jabc}.
\end{multline}
Apply \eqref{tit} to complete the proof of the first identity in \eqref{ricg2}.
Similarly, we have
\[0=R_{iabc}\p_{abc}=\frac13\big[R_{iabc}+R_{ibca}+R_{icab} \big]\p_{abc}=\frac16dT_{iabc}\p_{abc}+\frac13\sb_iT_{abc}\p_{abc}\]
which proves the first equality in \eqref{dtt}. Apply \eqref{dh} to achieve the second and \eqref{lmt0} to get the third.

We obtain  from \eqref{torcy2} using \eqref{iden} 
\begin{equation}\label{ng4}
\begin{split}
\sigma^T_{jabc}\ph_{jabc}=3T_{jas}T_{bcs}\ph_{jabc}=-2||T||^2+12||\theta||^2+12\lambda^2
\end{split}
\end{equation}
We calculate from \eqref{dh} applying \eqref{tit}, \eqref{ng4} and \eqref{torcy3}
\begin{equation}\label{g22}
\begin{split}
dT_{jabc}\ph_{jabc}=4\sb_jT_{abc}\ph_{jabc}+2\sigma^T_{jabc}\ph_{jabc}=-24\sb_j\theta_j-4||T||^2+24||\theta||^2+24\lambda^2\\
=24\delta\theta -4||\delta\ph||^2+72||\theta||^2+4(d\p,\ph)^2
\end{split}
\end{equation}
Take the trace in the first identity in \eqref{ricg2} substitute  \eqref{g22} into the obtained equality and use \eqref{lm} and \eqref{torcy3} to get the second identity in \eqref{ricg2}. 

The equality  \eqref{scal1} follows from \eqref{rics}, the second identity in \eqref{ricg2} and \eqref{torcy3}.
\end{proof}
\begin{rmrk}
The Riemannian Ricci tensor and the Riemannian scalar curvature of a general $G_2$ manifold are calculated in \cite{Br1}.
\end{rmrk}
We obtain from the proof the next corollary, first established by Bonan \cite{Bo} for a parallel $G_2$ spaces.
\begin{cor}\label{corfb}
If the curvature of the characteristic connection satisfies the Riemannian first Bianchi identity then the Ricci tensor vanishes.
\end{cor}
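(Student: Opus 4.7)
The plan is to combine the explicit Ricci formula for the characteristic connection from Theorem~\ref{thRic}, namely $Ric_{ij}=\frac{1}{12}dT_{iabc}\ph_{jabc}-\sb_i\theta_j$, with the structural consequence of the Riemannian first Bianchi identity supplied by Theorem~\ref{tFBI}, which asserts that $dT=-2\sb T$ and in particular that $\sb T$ is a 4-form. The entire proof is then just one substitution.

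First I would rewrite the Lee form using \eqref{tit} as $\theta_j=\frac{1}{6}T_{abc}\ph_{abcj}$ and, since $\sb\ph=0$, differentiate covariantly to obtain $\sb_i\theta_j=\frac{1}{6}(\sb_iT_{abc})\ph_{abcj}$. Reordering the indices of the 4-form $\ph$ by the cyclic shift $(abcj)\mapsto(jabc)$, which is an odd permutation of four indices, produces $\ph_{abcj}=-\ph_{jabc}$, so that $\sb_i\theta_j=-\frac{1}{6}(\sb_iT_{abc})\ph_{jabc}$.

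Next I would insert the identity $\sb_iT_{abc}=-\frac{1}{2}dT_{iabc}$, which is Theorem~\ref{tFBI} read componentwise, into this formula. This yields $\sb_i\theta_j=\frac{1}{12}dT_{iabc}\ph_{jabc}$, which coincides exactly with the first term in the Ricci formula from Theorem~\ref{thRic}. The two contributions cancel and $Ric_{ij}=0$, establishing the corollary.

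I do not expect any conceptual obstacle: the argument reduces to a single substitution once Theorems~\ref{thRic} and~\ref{tFBI} are in hand. The only delicate point is the sign bookkeeping when cyclically reordering the four indices of $\ph$, and this is a routine mechanical step rather than a genuine difficulty.
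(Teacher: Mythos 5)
Your argument is correct: the sign bookkeeping checks out ($\ph_{abcj}=-\ph_{jabc}$ for the odd cyclic shift, so $\sb_i\theta_j=-\frac16(\sb_iT_{abc})\ph_{jabc}$, and inserting $\sb_iT_{abc}=-\frac12 dT_{iabc}$ from Theorem~\ref{tFBI} gives exactly $\frac1{12}dT_{iabc}\ph_{jabc}$, cancelling the first term of \eqref{ricg2}). However, your route is not the one the paper takes, and it is heavier than necessary. The paper reads the corollary off the intermediate identity \eqref{ricdt} in the proof of Theorem~\ref{thRic}: since the curvature of the characteristic connection takes values in $g_2$, one has $2Ric_{ij}=R_{iabc}\ph_{jabc}=\frac13\bigl[R_{iabc}+R_{ibca}+R_{icab}\bigr]\ph_{jabc}$, and the Riemannian first Bianchi identity \eqref{RB} kills the bracket outright, so $Ric=0$ with no further input. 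Your version instead invokes Theorem~\ref{tFBI}, i.e.\ the full characterization $dT=-2\nabla T=\frac23\sigma^T$ of connections satisfying \eqref{RB}, which is a substantially deeper external result than what is needed; what you gain is an explicit verification that the two terms of the Ricci formula \eqref{ricg2} cancel identically under that characterization, which is a nice consistency check of the formulas but makes the corollary depend on machinery it does not require. Both proofs are valid; the paper's is the more elementary and self-contained of the two.
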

We get from \eqref{g22} the next 
\begin{cor}
On a co-calibrated $G_2$ structure with closed torsion one has $||\delta\ph||^2=36\lambda^2=(d\p,\ph)^2$. 

In particular, a strictly  integrable co-calibrated $G_2$ structure with closed torsion is parallel, $\sb\p=0$. 
\end{cor}
\begin{cor}\label{thconj2}
Let $(M,\p)$ be an integrable  $G_2$-manifild with vanishing scalar curvature  of the characteristic connection, $Scal=0$. If the structure is co-calibrated then $dT\in\Lambda^4_7\oplus\Lambda^4_{27}$, $dT_{ijkl}\ph_{ijkl}=0.$

In particular,  $||T||^2=6\lambda^2$ which is equivalent to $||\delta\ph||^2=(d\p,\ph)^2.$

\end{cor}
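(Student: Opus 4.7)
The plan is to combine the scalar curvature formula of Theorem~\ref{thRic} with the trace identity \eqref{g22} derived in its proof, under the balanced hypothesis $\theta=0$.  First I would substitute $\theta=0$ into the second identity of \eqref{ricg2}, which collapses to
\[Scal=-\tfrac{1}{3}||T||^2+\tfrac{1}{18}(d\p,\ph)^2.\]
The hypothesis $Scal=0$ then forces $||T||^2=\tfrac{1}{6}(d\p,\ph)^2$, and in view of the definition $\lambda=\tfrac{1}{6}(d\p,\ph)$ from \eqref{lm}, this reads $||T||^2=6\lambda^2$.  This already yields the second assertion of the corollary.

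Next I would substitute $\theta=0$ into the intermediate identity \eqref{g22}, which reduces to $dT_{jabc}\ph_{jabc}=-4||T||^2+24\lambda^2$.  Plugging in $||T||^2=6\lambda^2$ from the previous step makes the right-hand side vanish identically, establishing $dT_{ijkl}\ph_{ijkl}=0$.  For the equivalence $||T||^2=6\lambda^2\Longleftrightarrow||\delta\ph||^2=(d\p,\ph)^2$, I would use the second norm identity in \eqref{torcy3}, which at $\theta=0$ becomes $||T||^2=||\delta\ph||^2-\tfrac{5}{6}(d\p,\ph)^2$; combining this with $||T||^2=\tfrac{1}{6}(d\p,\ph)^2$ gives exactly $||\delta\ph||^2=(d\p,\ph)^2$.

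The argument is essentially direct substitution into identities already at hand, so there is no genuine conceptual obstacle.  The only thing requiring care is keeping the two parallel norm expressions for $||T||^2$ in \eqref{torcy3} and the two parallel expressions for $Scal$ in \eqref{ricg2} consistent; once the relation $6\lambda=(d\p,\ph)$ is used to rewrite everything in a common scale, the coefficients $-\tfrac{1}{3}$, $\tfrac{1}{18}$, $-4$, $24$, and $-\tfrac{5}{6}$ telescope cleanly to give the stated conclusions.
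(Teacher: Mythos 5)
Your proposal is correct and follows essentially the same route the paper intends: the corollary is a direct substitution of $\theta=0$ and $Scal=0$ into the formulas \eqref{ricg2}, \eqref{g22} and \eqref{torcy3} of Theorem~\ref{thRic}, with $\lambda=\frac16(d\p,\ph)$ from \eqref{lm} converting between the two normalizations. The only cosmetic difference is order: the paper's shortest path gets $dT_{iabc}\ph_{iabc}=0$ immediately by tracing the first identity in \eqref{ricg2} at $\theta=0$ and then reads off $||T||^2=6\lambda^2$ from \eqref{g22}, whereas you derive $||T||^2=6\lambda^2$ first and then make \eqref{g22} vanish; the two are equivalent since the second identity in \eqref{ricg2} is itself obtained by substituting \eqref{g22} into that trace.
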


\subsection{Compact  Gauduchon $G_2$ manifolds} 
In this subsection, we recall the notion of conformal deformations of a given $G_2$ structure $\p$  from \cite{FG,FI1,Kar1} and proof Theorem~\ref{cooo}. 

Let $\bar\p=e^{3f}\p$ be a conformal deformation of $\p$. The induced metric $\bar g=e^{2f}g$ and $\bar{*}\bar\p=e^{4f}*\p$, where $\bar{*}$ is the Hodge star operator with respect to $\bar{g}$. The class of integrable $G_2$ structures is invariant under conformal deformations. An easy calculations give $(d\bar\p,\bar{*}\bar\p)=e^{-f}(d\p,*\p)$ which compared with \eqref{lm} yields $\bar\lambda=e^{-f}\lambda$. Hence, the class of strictly integrable $G_2$ manifolds, ($\lambda=0$), is invariant under conformal deformations while the class of constant non-zero type is not conformally invariant.

The Lee forms are connected by $\bar\theta=\theta+4df$. Using the expression of the Gauduchon theorem 
 in terms of a Weyl structure \cite [Appendix 1]{tod},
one can find, in a unique way, a conformal $G_2$ structure such that the corresponding Lee 
1-form is coclosed with respect to the induced metric due to \cite[Theorem~3.1]{FI1}.

Furthermore, we establishe the following:
\begin{thrm}\label{RG}
Let $(M,\p)$ be a compact integrable $G_2$ manifold with a Gauduchon $G_2$ structure, $\delta\theta=0$.  
 If  the symmetric part of the characteristic Ricci tensor  is non-negative, $Ric(X,X)\ge 0$, then the Lee form is $\sb$-parallel and $\delta T=-d\lambda\lrcorner\p\in\Lambda^2_7$.
\end{thrm}
\begin{proof} We start with the next identity
\begin{equation}\label{iii}
\sb_i\delta T_{ij}=\frac12\delta T_{ia}T_{iaj}.
\end{equation}
 shown in \cite[Proposition~3.2]{IS} for any metric connection with a totally skew-symmetric torsion. 
We calculate the left-hand side  of \eqref{iii} applying \eqref{deltaT} as follows
\begin{equation}\label{ntss}
\begin{split}
\sb_i\delta T_{ij}=\sb_i[d^{\sb}\theta_{ij}-\sb_t\lambda\p_{tij}]=
\sb_i\sb_i\theta_j-\sb_i\sb_j\theta_i-\frac12T_{tis}\sb_s\lambda\p_{tij},
\end{split}
\end{equation}
where we applied $d^2\lambda=0$ and \eqref{tsym} to get the last term.

Substitute \eqref{ntss} into \eqref{iii} using \eqref{deltaT} to get
\begin{equation}\label{ntss1}
\sb_i\sb_i\theta_j-\sb_i\sb_j\theta_i-\frac12T_{abs}\sb_s\lambda\p_{sab}=\frac12d^{\sb}\theta_{ab}T_{abj}-\frac12T_{abj}\sb_s\lambda\p_{sab}.
\end{equation}
The Ricci identity  
$\sb_i\sb_j\theta_i=
\sb_j\sb_i\theta_i+Ric_{js}\theta_s-\frac12d^{\sb}\theta_{ai}T_{aij}$ substituted into \eqref{ntss1} yields
\begin{equation}\label{gafin}
\sb_i\sb_i\theta_j+\sb_j\delta\theta-Ric_{js}\theta_s=\frac12\sb_s\lambda\Big(T_{abs}\p_{abj}-T_{abj}\p_{abs}  \Big)=-\sb_s\lambda\theta_a\p_{asj},
\end{equation}
where we use the first identity of \eqref{lmt0} to achieve the last equality. 

Multiply the both sides of \eqref{gafin} with $\theta_j$, use  $\delta\theta=0$ together with the identity 
$$\frac12\Delta||\theta|^2=-\frac12\LC_i\LC_i||\theta||^2=-\frac12\sb_i\sb_i||\theta||^2=-\theta_j\sb_i\sb_i\theta_j-||\sb\theta||^2$$ 
to get
\begin{equation}\label{gafinf}
-\frac12\Delta||\theta||^2-Ric(\theta,\theta)-||\sb\theta||^2=0.
\end{equation}
An integration of \eqref{gafinf} over the compact $M$ and  the condition $Ric(X,X)\ge 0$ gives $\sb\theta=0=Ric(\theta,\theta)$. Now,  \eqref{deltaT} completes the proof.
\end{proof}
\subsection{Proof of Theorem~\ref{cooo}.}
\begin{proof}
Observe that the symmetricity of the Ricci tensor is equivalent to $\delta T=0$  which combined with $\sb\theta=0$  and \eqref{deltaT} gives $d\lambda=0$. These two identities together with \eqref{ricg2} and \eqref{dtt} yield
\begin{equation}\label{nnt}
dT_{iabc}\Phi_{jabc}=\sb_i\theta_j=0, \qquad dT_{iabc}\p_{abc}=d\lambda_i=0.
\end{equation}
Hence,  for any vector  $X\in T_pM$ the 3-forms $(X\lrcorner dT)\in \Lambda^3_{27}$ and  therefore  the four form $dT=0$ due to Proposition~\ref{4-for}. Now \eqref{ricg2} and \eqref{g22} show $Ric=d||T||^2=0$ which completes the proof.
\end{proof}

\section{$G_2$-instanton connections. Proof of Theorem~\ref{inst}}
In this section, we prove Theorem~\ref{inst}.
\begin{proof}
We begin with
\begin{lemma}
Let $(M,\p)$ be  an integrable $G_2$ manifold and the  curvature of the characteristic  connection $\sb$   is a $G_2$-instanton. Then  $\delta T\in\Lambda^2_{14}\cong  \frak{g}_2$.
\end{lemma}
\begin{proof}
Suppose the curvature $R$ of $\sb$ is a $G_2$-instanton. 
Multiply \eqref{gen} with  $\p$ and  apply  \eqref{rri} to get
\begin{equation}\label{rri2}
0=\Big[3R_{abci}-3R_{iabc}\Big]\p_{abc}=\Big[\frac32dT_{abci}-\sigma^T_{abci}\Big]\p_{abc}
\end{equation}
We obtain from \eqref{rri2} and  \eqref{dtt} 
$$
dT_{abci}\p_{abc}=\frac23\sigma^T_{abci}\p_{abc}=2\sb_iT_{abc}\p_{abc}
$$ 
and use \eqref{lmt} to conclude 
\begin{equation}\label{rri5}
\begin{split}
\sb_iT_{abc}\p_{abc}=6d\lambda_i=\frac13\sigma^T_{abci}\p_{abc}=-\theta_s\p_{sab}T_{abi}=-\theta_sT_{sab}\p_{abi}=d^{\sb}\theta_{ab}\p_{abi}
\end{split}
\end{equation}
Substitute \eqref{rri5} into \eqref{dtt} and \eqref{nnew1} to get
\begin{equation}\label{rri6}
\begin{split}
\delta T_{ab}\p_{abi}=0 \Leftrightarrow \delta T\in\Lambda^2_{14};\quad \sb_aT_{bci}\p_{abc}=-6d\lambda_i.
\end{split}
\end{equation}
The lemma is proved.
\end{proof}
\begin{lemma}\label{sbthetain}
Let $(M,\p)$ be  an integrable $G_2$ manifold  of constant type and the  characteristic  connection   is Ricci-flat and  is a $G_2$-instanton.

Then the Lee form $\theta$ is $\sb$-parallel, $$\sb\theta=0.$$
In particular, the Lee form is co-slosed, $\delta\theta=0$.
\end{lemma}
\begin{proof}
Multiply \eqref{gen} with $\ph$ and use \eqref{rri} to get
\begin{equation}\label{rri21}
\Big[3R_{abci}-3R_{iabc}\Big]\ph_{abcj}=-6R_{cjci}+6R_{iaaj}=6Ric_{ji}+6Ric_{ij}=\Big[\frac32dT_{abci}-\sigma^T_{abci}\Big]\ph_{abcj}.
\end{equation}

We obtain from \eqref{rri21} and \eqref{ricdt} using \eqref{dh} that
\begin{equation}\label{rri4}
\begin{split}
6Ric_{ij}-6Ric_{ji}=
\Big[-\frac12dT_{abci}-2\sb_iT_{abc}+\sigma^T_{abci} \Big]\ph_{abcj}=
-\frac32\Big[\sb_aT_{bci}+\sb_iT_{abc} \Big]\ph_{abcj}.
\end{split}
\end{equation}
Suppose $Ric=0$. Then \eqref{rri2} and \eqref{rri21} yield 
\[(3dT-2\sigma^T)_{abci}\p_{abc}=(3dT-2\sigma^T)_{abci}\ph_{abcj}=0. \]
Hence,  for any $X\in T_pM$ the 3-forms  $X\lrcorner (3dT-2\sigma^T)\in \Lambda^3_{27}$ 
and Proposition~\ref{4-for} implies 
\begin{equation}\label{dtsig}3dT-2\sigma^T=0.\end{equation}
Therefore, $$T_{abc}dT_{abcj}=\frac23T_{abc}\sigma^T_{abcj}=0,$$
where we used the identity $T_{abc}\sigma^T_{abcj}=0$ observed in \cite[Proposition~3.1]{IS}.

The second Bianchi identity for a metric connection with totally skew-symmetric torsion reads \cite[Proposition~3.5]{IS}
\begin{equation}\label{e1}
d(Scal)_j-2\sb_iRic_{ji}+\frac16d||T||^2_j+\delta T_{ab}T_{abj}+\frac16T_{abc}dT_{jabc}=0.
\end{equation}
Use  \eqref{e1} for the characteristic connection to obtain $d||T||^2=0$ since all other terms in \eqref{e1} vanish.

Substitute \eqref{dh} into \eqref{ricg2},  use \eqref{tit} and \eqref{rri4}  to get
\begin{equation}\label{su1}
0=\sb_i\theta_j-\frac1{12}(\sb_iT_{abc}-3\sb_aT_{bci}+2\sigma^T_{iabc})\ph_{jabc}=3\sb_i\theta_j-\frac1{6}\sigma^T_{iabc}\ph_{jabc},
\end{equation}
where we used $$\sb_aT_{bci}\ph_{abcj}=-\sb_iT_{abc} \ph_{abcj}=6\sb_i\theta_j$$ following from  $Ric=0$ and \eqref{rri4}.

Then we have using \eqref{tit}, \eqref{su1}, \eqref{ricg2} and \eqref{sigma}
\begin{equation}\label{li}
\begin{split}
\sb_i\theta_j=\frac16\sb_iT_{abc}\ph_{abcj}=\frac1{12}dT_{iabc}\ph_{jabc}=\frac1{18}\sigma^T_{iabc}\ph_{jabc}=
\frac16T_{abs}T_{cis}\ph_{abcj}.
\end{split}
\end{equation}
We calculate from \eqref{li} applying \eqref{torcy2}
\begin{equation}\label{part}
\begin{split}
6\sb_p\theta_k=T_{jsl}T_{lmp}\ph_{jsmk}
=-T_{klm}T_{lmp}-\frac12T_{jsk}\ph_{jslm}T_{lmp}-\theta_a\ph_{aklm}T_{lmp}+\lambda\p_{klm}T_{lmp}.
\end{split}
\end{equation}
Since $\lambda=const$, \eqref{newsymth} implies $\sb\theta$ is symmetric. Then \eqref{ttg2} yields $\theta\lrcorner T=d\theta\in \Lambda^2_{14}$. i.e. 
\begin{equation}\label{tg22}
\theta_sT_{sab}\ph_{abij}=-2\theta_sT_{sij}.
\end{equation}
Multiply \eqref{part} with $\theta_p$, use \eqref{newsymth} and \eqref{tg22} to get
\begin{equation}\label{part1}
\begin{split}
3\sb_k||\theta||^2=6\theta_p\sb_k\theta_p=6\theta_p\sb_p\theta_k=T_{jsl}T_{lmp}\ph_{jsmk}\theta_p\\
=-T_{klm}T_{lmp}\theta_p-\frac12T_{jsk}\ph_{jslm}T_{lmp}\theta_p-\theta_a\ph_{aklm}T_{lmp}\theta_p+\lambda\p_{klm}T_{lmp}\theta_p\\
=-T_{klm}T_{lmp}\theta_p+T_{jsk}T_{jsp}\theta_p+2\theta_aT_{akp}\theta_p=0.
\end{split}
\end{equation}
Thus, the norm of the Lee form is constant, $||\theta||^2=const$.

Since $Scal=0$, the second identity in \eqref{ricg2} yields
\begin{equation}\label{delth}
3\delta\theta=-2||\theta||^2+\frac13||T||^2-\frac1{18}(d\p,\ph)^2.
\end{equation}
We already know that the norm of the torsion is constant, $\sb_k||T||^2=0$,  the norm of the Lie form $\theta$ is a  constant, $\sb_k||\theta||^2=0$ due to \eqref{part1} and the last term in \eqref{delth} is also constant. 
 Now, \eqref{delth}  shows that the codifferential of $\theta$ is a constant, 
\begin{equation}\label{delc}
\sb_k\delta\theta=-\sb_k\sb_i\theta_i=0.
\end{equation}
Using \eqref{newsymth}, \eqref{delc}  and the Ricci identiy for the characteristic connection $\sb$, we calculate 
\begin{multline*}
0=\frac12\sb_i\sb_i||\theta||^2=\theta_j\sb_i\sb_i\theta_j+||\sb\theta||^2=\theta_j\sb_i\sb_j\theta_i+||\sb\theta||^2\\=\theta_j\sb_j\sb_i\theta_i
-R_{ijis}\theta_s\theta_j-\theta_jT_{ijs}\sb_s\theta_i+||\sb\theta||^2=Ric_{js}\theta_j\theta_s+||\sb\theta||^2=||\sb\theta||^2,
\end{multline*}
since $Ric=0$, $\sb\theta$ is symmetric due to \eqref{newsymth} and $\lambda=const.$. This completes the proof of the Lemma.
\end{proof}
To finish the proof of Theorem~\ref{inst} we observe  from  \eqref{tit}, \eqref{lmt},  \eqref{ricg2},  \eqref{dtt}  
 and Lemma~\ref{sbthetain} the validity of the following  identities
\begin{equation}\label{nth}
\begin{split}
dT_{pjkl}\Phi_{jkli}=12\sb_p\theta_i=0;\quad
dT_{jklp}\p_{jkl}=12\sb_p\lambda=0;
\end{split}
\end{equation}
The identities \eqref{nth} show that for any $X\in T_pM$ the 3-forms  $(X\lrcorner dT)\in \Lambda^3_{27}$. Hence, the four form $dT=0$ due to Proposition~\ref{4-for} and $\sigma^T=0$ because of \eqref{dtsig}. Now, \eqref{tsym} gives $\LC T=\sb T$. 

This  completes the proof of the Theorem~\ref{inst}.
\end{proof}
Concerning the $G_2$-Hull connection, we prove the following
\begin{thrm}\label{hul}
The curvature $R^h$ of the $G_2$-Hull connection $\sb^h$ is a $G_2$ instanton if and only if the torsion is closed, $dT=0$.
\end{thrm}
\begin{proof}
We start with the general well-known formula for the curvatures of two metric connections with totally skew-symmetric torsion $T$ and $-T$, respectively, see e.g. \cite{MS}, which applied to the curvatures of the characteristic connection and the $G_2$-Hull connection reads
\begin{equation}\label{hust}
R(X,Y,Z,V)-R^h(Z,V,X,Y)=\frac12dT(X,Y,Z,V).
\end{equation}
If $dT=0$ the result was already  observed  in \cite{MS}. Indeed, in this case  the $G_2$-Hull connection is a $G_2$ instanton since $\sb\p=0$ and the holonomy group of $\sb$ is contained in the Lie algebra $\frak{g}_2$. \cite{MS}.

For the converse, \eqref{hust} yields
\begin{equation}\label{huin1}
\begin{split}
dT_{iabc}\p_{abc}=R_{iabc}\p_{abc}+R^h_{bcai}\p_{abc}=0.\\
dT_{iabc}\ph_{jabc}=R_{iabc}\ph_{jabc}+R^h_{bcai}\ph_{jabc}=-2R_{iaja}-2R^h_{jaai}=2Ric_{ij}-2Ric^h_{ji}=0,
\end{split}
\end{equation}
where $Ric^h$ is the Ricci tensor of the $G_2$-Hull connection and  the trace of \eqref{hust} gives $Ric(X,V)-Ric^h(V,X)=0.$
The identities \eqref{huin1} show that for any $X\in T_pM$ the 3-forms  $(X\lrcorner dT)\in \Lambda^3_{27}$. Hence, the four form $dT=0$ due to Proposition~\ref{4-for} 
\end{proof}

\section{Characteristic connection with curvature $R\in S^2\Lambda^2$. Proof of Theorem~\ref{mainsu3}}
We start with  
\begin{prop}\label{mainsu}
Let $(M,\p)$ be  a co-calibrated $G_2$ manifold and the   characteristic  connection $\sb$   has curvature $R \in S^2\Lambda^2$, i.e. \eqref{r4} holds.

Then the $G_2$ manifold is of constant type, the codifferential $\delta\ph$, 
the torsion $T$,  its exterior derivative $dT$ are $\sb$-parallel, and the Riemannian scalar curvature is constant, $$\sb T=\sb dT=\sb\delta\ph=0, \qquad Scal^g=-\frac1{12}||T||^2+\frac1{18}(d\p,\ph)^2=-\frac1{12}||\delta\ph||^2+\frac5{36}(d\p,\ph)^2=const.$$
\end{prop}
\begin{proof} 
We know from \cite[Corollary~3.4]{I} that the curvature of a metric connection $\sb$ with skew-symmetric torsion $T$ satisfies \eqref{r4} if and only if $\sb T$ is a 4-form.

To show that the additional condition $\theta=0$ yields $\sb T=0$ we observe that \eqref{lmt} together with  \eqref{tit} and \eqref{newsymth} imply
$
\sb_pT_{jkl}\Phi_{jkli}=6\sb_p\theta_i=0,\quad
\sb_pT_{jkl}\p_{jkl}=6\sb_p\lambda=0.
$ 
Hence, the 3-form $(X\lrcorner\sb T)\in \Lambda^3_{27}$ and the four form $\sb T=0$ due to Proposition~\ref{4-for}. Consequently, formula \eqref{sigma} shows that the four form $\sigma^T$ is also $\sb$-parallel, $\sb\sigma^T=0$ and we get from \eqref{dh} $\sb dT=0$. Finally, the formula \eqref{scal1}  completes the proof of the Proposition~\ref{mainsu}.
\end{proof}

\subsection{Proof of Theorem~\ref{mainsu3}}

To   proof Theorem~\ref{mainsu3}  we observe  that if the curvature of the characteristic connection is symmetric in exchange the first and second pairs, $R(X,Y,Z,V)=R(Z,V,X,Y)$ then it is a $G_2$ instanton because $\sb \p=0$ and \eqref{rr} holds. We apply Theorem~\ref{inst} to conclude that $dT=\delta T=\sigma^T=0$ and $\sb T=\LC T$. Note that the condition $\sb T=\LC T$ is equivalent to $\sigma^T=0$ because of  \eqref{tsym}. Now, \eqref{dh} yields $0=d^{\sb}T=4\sb T=4\LC T$  since $\sb T$ is a 4-form and  
Theorem~\ref{tFBI} shows that the Riemannian first Bianchi identity \eqref{RB} holds.

For the converse, 
the Riemannian first Bianchi identity \eqref{RB} on an integrable $G_2$ manifold implies \eqref{r4} and the vanishing of the Ricci tensor due to Corollary~\ref{corfb}. 

Finally,  the condition c) obviously implies  a) and b). 
This  completes the proof of the Theorem~\ref{mainsu3}.

\section{Integrable $G_2$ manifolds with closed torsion}
In this section we slightly improve  \cite[Theorem~5.1]{FI} and prove Theorem~\ref{closTt} and Corollary~\ref{parallel}. 
\begin{thrm}\label{closT}
Let $(M,\p)$ be  an integrable $G_2$ manifold. The following conditions are equivalent:
\begin{itemize}
\item[a).] The torsion is closed, $dT=0$.
\item[b).] The characteristic Ricci tensor is given by
\begin{equation}\label{clos1}Ric=-\sb\theta.
\end{equation}
 In particular, the integrable $G_2$ structure with closed torsion is of constant type, $d\lambda=0$.
\end{itemize}
\end{thrm}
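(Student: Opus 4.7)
The plan is to use the explicit formula \eqref{ricg2} for the Ricci tensor of the characteristic connection, $Ric_{ij} = \frac{1}{12}dT_{iabc}\ph_{jabc} - \sb_i\theta_j$, which immediately reduces the condition $Ric = -\sb\theta$ to the vanishing $dT_{iabc}\ph_{jabc} = 0$ of the $\Lambda^3_7$-component of $X \lrcorner dT$. The forward direction $a) \Rightarrow b)$ is then immediate; moreover, the trace identity \eqref{dtt} combined with \eqref{lmt} gives $12\sb_i\lambda = 2\sb_iT_{abc}\p_{abc} = -dT_{iabc}\p_{abc} = 0$, so $\lambda$ is constant and the structure is of constant type.

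For the reverse direction $b) \Rightarrow a)$, I would first establish constancy of $\lambda$ by comparing two expressions for the skew-symmetric part of $\delta T$. Taking the antisymmetrization of the hypothesis $Ric = -\sb\theta$ and using the skew-symmetric identity from \eqref{rics}, one obtains $\delta T = d^{\sb}\theta$. On the other hand the independently derived formula \eqref{deltaT} reads $\delta T = d^{\sb}\theta - d\lambda \lrcorner \p$. Subtracting the two yields $d\lambda \lrcorner \p = 0$, and the injectivity of the map $\alpha \mapsto \alpha \lrcorner \p$ on $\Lambda^1$ (a direct consequence of $\p_{ijk}\p_{ajk} = 6\delta_{ia}$) forces $d\lambda = 0$.

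With $\lambda$ constant, the identity \eqref{dtt} again gives $dT_{iabc}\p_{abc} = -12\sb_i\lambda = 0$, which is the vanishing of the $\Lambda^3_1$-component of $X \lrcorner dT$. Combined with the vanishing of its $\Lambda^3_7$-component (which is equivalent to $Ric = -\sb\theta$), this shows that the 3-form $X \lrcorner dT$ lies in $\Lambda^3_{27}$ for every $X$. An appeal to Proposition~\ref{4-for} then forces the 4-form $dT$ to vanish, completing the proof.

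The main obstacle is the deduction that $\lambda$ is constant in the reverse direction: without it the $\Lambda^3_1$-component of $X \lrcorner dT$ need not vanish and Proposition~\ref{4-for} is inapplicable. The crucial observation here is that $Ric = -\sb\theta$ carries information not only in its symmetric part (which encodes the $\Lambda^3_7$-component) but also in its skew part, and comparing the latter with the independently computed expression \eqref{deltaT} for $\delta T$ isolates the discrepancy $d\lambda \lrcorner \p$, pinning down the constancy of $\lambda$.
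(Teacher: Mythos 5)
Your proposal is correct and follows essentially the same route as the paper: the forward direction via the Ricci formula \eqref{ricg2} together with \eqref{dtt} and \eqref{lmt}, and the converse by extracting $\delta T=d^{\sb}\theta$ from the skew part of $Ric=-\sb\theta$ via \eqref{rics}, comparing with \eqref{deltaT} to get $d\lambda=0$, and then concluding $X\lrcorner dT\in\Lambda^3_{27}$ so that Proposition~\ref{4-for} forces $dT=0$. No substantive differences from the paper's argument.
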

\begin{proof}
Theorem~\ref{closT} follows from  \cite[Theorem~5.1]{FI} and the proof of  \cite[Theorem~5.4]{FI}. 

We give here a proof for completeness.

The condition $dT=0$ and  the equality \eqref{ricdt} yield $Ric=-\sb\theta$ which  implies b).

For the converse, assume b) holds. Then  \eqref{rics} yields 
$
\delta T=d^{\nabla}\theta.
$ 
  Now the equality \eqref{deltaT} shows $d\lambda=0$ and \eqref{dtt} implies $dT_{jabc}\p_{abc}=0.$ 
The equality   \eqref{clos1} combined with the first identity in  \eqref{ricg2} yields $dT_{jabc}\Phi_{iabc}=0.$
The last two equalities  show that for any $X\in T_pM$ the 3-forms $(X\lrcorner dT)\in \Lambda^3_{27}$. Hence, the four form $dT=0$ due to Proposition~\ref{4-for} which completes the equivalences of a) and b).
\end{proof}
Note that Theorem~\ref{closT}  generalizes \cite[Proposition~4.10]{Wit}.

\subsection{Proof of Theorem~\ref{closTt} and Crollary~\ref{parallel}.}
\begin{proof}  
From Theorem~\ref{closT}  we have taking into account \eqref{rics}
\begin{equation}\label{nnewt}Ric=-\sb\theta, \quad Scal =\delta \theta, \quad \delta T_{ij}=\sb_i\theta_j-\sb_j\theta_i, \quad d\lambda=0.
\end{equation}
Substitute  \eqref{nnewt} into the second Bianchi identity \eqref{e1} and use $dT=0$ to get
\begin{equation}\label{e11}
\sb_j\delta\theta+2\sb_i\sb_j\theta_i+\delta T_{ab}T_{abj}+\frac16d||T||^2_j=0.
\end{equation}
We evaluate the second term in \eqref{e11} using the Ricci identity for $\sb$ and \eqref{nnewt} as follows
\begin{equation}\label{e111}
\sb_i\sb_j\theta_i=\sb_j\sb_i\theta_i-R_{ijis}\theta_s-T_{ijs}\sb_s\theta_i=-\sb_j\delta \theta-\theta_s\sb_j\theta_s-\frac12\delta T_{si}T_{sij}.
\end{equation}
We obtain from \eqref{e111} and \eqref{e11} 
\begin{equation}\label{tt1}
-\sb_j\delta \theta-2\theta_s\sb_j\theta_s+\frac16\sb_j||T||^2=0.
\end{equation}
Another covariant derivative of \eqref{tt1} together with \eqref{nnewt} yield
\begin{equation}\label{tt2}
\Delta\delta\theta-2\theta_s\sb_j\sb_j\theta_s-2||Ric||^2-\frac16\Delta||T||^2=0.
\end{equation}
On the other hand,  \eqref{nnewt} implies 
\begin{equation}\label{is1}
\sb_i\sb_j\theta_i=\sb_i(\sb_i\theta_j-\delta T_{ij})=\sb_i\sb_i\theta_j-\sb_i\delta T_{ij}=\sb_i\sb_i\theta_j-\frac12\delta T_{ia}T_{iaj},
\end{equation}
where we used the  identity \eqref{iii}. 
The equalities \eqref{is1} and \eqref{e11} yield
\begin{equation}\label{is2}
\sb_j\delta \theta+2\sb_i\sb_i\theta_j+\frac16\sb_j||T||^2=0.
\end{equation}
Substitute the second term in \eqref{is2} into \eqref{tt2} to get
\begin{equation}\label{fmax}
\Delta\Big(\delta\theta-\frac16||T||^2 \Big)+\theta_j\sb_j\Big( \delta\theta+\frac16||T||^2\Big)=||Ric||^2\ge 0.
\end{equation}
Suppose $Ric=0$.   This combined with $dT=0$ and \eqref{ricg2} imply $\sb\theta=0$. Consequently, $0=\sb_i\theta_i=\LC_i\theta_i=-\delta\theta, \quad d||\theta||^2=0$ and  $d||T||^2=0$ either by Theorem~\ref{cooo} or \eqref{ricg2} and \eqref{nnewt}.

Assume c) holds. Since $M$ is compact and $d||T||^2=0$ we may apply  the strong maximum principle to \eqref{fmax} (see e.g. \cite{YB,GFS}) to achieve $\delta\theta=const.=0=Ric$. Conversely, the condition $\delta\theta=0$ imply $d||T||^2=0=Ric$ by the strong maximum principle applied to \eqref{fmax}. Hence a),  b) and c) are equivalent.

Finally, to show the equivalences of c) and d) we use \eqref{rics} and \eqref{nnewt} to  write \eqref{fmax} in the form
\begin{equation}\label{fmaxf}
\Delta\Big(Scal^g-\frac5{12}||T||^2 \Big)+\theta_j\sb_j\Big( Scal^g-\frac1{12}||T||^2|\Big)=||Ric||^2\ge 0.
\end{equation}
The strong maximum principle applied to \eqref{fmaxf} imply c) is equivalent to d) in the same way as above.

The proof of Theorem~\ref{closTt} is completed.
\end{proof}
The Corollary~\ref{parallel} follows from  the observation  that if $Scal^g=0$ then $Ric=0=Scal$ by Theorem~\ref{closTt}. Then \eqref{rics} gives $||T||^2=0$. Hence, $\LC\p=0$

\subsection{Steady generalized  Ricci solitons}
It is shown in \cite[Proposition~4.28]{GFS} that a Riemannian manifold $(M,g,T)$ with a closed 3-form $T$  is a  steady generalized  Ricci soliton 
if there exists a vector field $X$ and a two-form $B$ such that it is a solution to the equations 
\begin{equation}\label{gein3}
Ric^g=\frac14T^2-\frac12{\cal{L}}_X g, \qquad \delta T=B,
\end{equation}
for $ B$ satisfying $d(B+X\lrcorner T)=0.$
In particular $\Delta T=-{\cal{L}}_XT$, where $\Delta=d\d+\d d$ is the Laplace operator.

We also  recall  the equivalent formulation \cite[Definition~4.31]{GFS} that a compact Riemannian manifold $(M,g,T)$ with a closed 3-form $T$ is a steady generalized   Ricci soliton  if there exists a vector field $X$ and a closed 2-form $k$ such that
\begin{equation}\label{gein2}
Ric^g=\frac14T^2-\frac12{\cal{L}}_Xg, \qquad \delta T=-X\lrcorner T-2k, \qquad dT=0.
\end{equation}
If the vector field $X$ is a gradient of a smooth function $f$ then one has the notion of a generalized gradient Ricci soliton.
\begin{prop}\label{grsol}
Let $(M,\p)$ be an integrable $G_2$ manifold with closed torsion form, $dT=0$. 

Then it is a steady generalized   Ricci soliton with $X=\theta$ and $B=d\theta-\theta\lrcorner T$.
\end{prop}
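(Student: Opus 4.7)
The plan is to verify directly the three conditions of \eqref{gein3} with $X=\theta$ and $B = d\theta - \theta\lrcorner T$: the generalized Einstein equation $Ric^g = \tfrac14 T^2 - \mathbb{L}_\theta g$, the identification $\delta T = B$, and the closedness condition $d(B + \theta\lrcorner T)=0$. The last of these is immediate, since $B + \theta\lrcorner T = d\theta$, whose exterior derivative vanishes.

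The backbone of the proof is Theorem~\ref{closT}: the assumption $dT=0$ yields $Ric = -\sb\theta$, i.e., $Ric_{ij} = -\sb_i\theta_j$. I would then split \eqref{rics} into its symmetric and antisymmetric parts. The antisymmetric identity $Ric(X,Y) - Ric(Y,X) = -\delta T(X,Y)$ combines with $Ric = -\sb\theta$ to give $\delta T(X,Y) = (\sb_X\theta)(Y) - (\sb_Y\theta)(X) = d^{\sb}\theta(X,Y)$. Formula \eqref{ttg2}, which reads $d\theta = d^{\sb}\theta + \theta\lrcorner T$, then produces $\delta T = d\theta - \theta\lrcorner T = B$, as required.

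For the Einstein equation, the symmetric part of \eqref{rics} reads $Ric^g(X,Y) = \tfrac12\bigl(Ric(X,Y) + Ric(Y,X)\bigr) + \tfrac14 T^2(X,Y)$, using that $T^2$ is symmetric while $\delta T$ is antisymmetric as a 2-form. Substituting $Ric_{ij} = -\sb_i\theta_j$ turns the first term into (a multiple of) $\mathbb{L}_\theta g$, since a short calculation based on \eqref{tsym} and the total skew-symmetry of $T$ shows that $\sb_i\theta_j + \sb_j\theta_i = \sb^g_i\theta_j + \sb^g_j\theta_i = (\mathbb{L}_\theta g)_{ij}$ (the torsion correction terms from $\sb^g = \sb - \tfrac12 T$ cancel by the skewness of $T$). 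Substituting this into the symmetrization delivers the generalized Einstein equation of \eqref{gein3}.

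The only genuine computation is the cancellation of the torsion contributions in the symmetrization step; every other ingredient is ready at hand from Theorem~\ref{closT}, formula \eqref{rics}, and \eqref{ttg2}, so no substantive obstacle is expected. The whole argument is essentially an extraction of the Einstein and conservation pieces from the two identities of \eqref{rics} after the Ricci tensor has been reduced to $-\sb\theta$.
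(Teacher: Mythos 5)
Your argument is correct and follows essentially the same route as the paper: both reduce everything to $Ric=-\sb\theta$ from Theorem~\ref{closT}, read off $\delta T=d^{\sb}\theta=d\theta-\theta\lrcorner T=B$ from the antisymmetry identity in \eqref{rics} combined with \eqref{ttg2}, verify $d(B+\theta\lrcorner T)=d^2\theta=0$, and obtain the Einstein equation from what remains of \eqref{rics} after the torsion corrections cancel in $\sb_i\theta_j+\sb_j\theta_i=\LC_i\theta_j+\LC_j\theta_i$. The only point to watch is the normalization of $\mathbb{L}_Xg$: the paper (as \eqref{gein1} shows, where $\mathbb{L}_{df}g$ is replaced by $\LC_i\LC_jf$) takes $(\mathbb{L}_Xg)_{ij}=\tfrac12(\LC_iX_j+\LC_jX_i)$, so under that convention your ``multiple'' is exactly $1$ and $X=\theta$ as claimed, whereas with the standard unhalved Lie derivative you would land on $Ric^g=\tfrac14T^2-\tfrac12\mathbb{L}_\theta g$.
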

\begin{proof}
As we identify the vector field $X$ with its corresponding 1-form via the metric, we have using \eqref{tsym}
\begin{equation}\label{acy1}
dX_{ij}=\LC_iX_j-\LC_jX_i=\sb_iX_j-\sb_jX_i+X_sT_{sij}.
\end{equation}
In view of \eqref{rics}, \eqref{tsym} and \eqref{acy1} we write the first equation in \eqref{gein3} in the form
\begin{equation}\label{acy2}
Ric_{ij}=-\frac12\delta T_{ij}-\frac12(\sb_iX_j+\sb_jX_i)=-\frac12\delta T_{ij}-\sb_iX_j+\frac12dX_{ij}-\frac12X_sT_{sij}.
\end{equation}
Set $X=\theta, \quad B=d\theta-\theta\lrcorner T$  and use \eqref{clos1} to get $\d T=B$ and \eqref{acy2} is trivially satisfied. Hence, \eqref{gein3} holds since $d(B+\theta\lrcorner T)=d^2\theta=0$.
\end{proof}
One fundamental consequence of Perelman's energy formula for Ricci flow is that compact steady solitons for Ricci flow are automatically gradient.  Adapting these energy functionals to generalized Ricci
flow it is proved in  \cite[Chapter~6]{GFS}  that steady generalized Ricci solitons on compact manifolds are automatically gradient and the 2-form $k = 0$  \cite[Corollary~6.11]{GFS}, 
i.e there exists a smooth function $f$ such that $X=grad(f)$ and \eqref{gein2} takes the form
\begin{equation}\label{gein1}
Ric^g_{ij}=\frac14T^2_{ij}-\LC_i\LC_j f, \qquad \delta T_{ij}=-df_sT_{sij}, \qquad dT=0.
\end{equation}
The smooth function $f$ is determined with $u=\exp(-\frac12f)$ where $u$ is  the first eigenfunction of the
Schr\"odinger operator, (see \cite[Lemma~6.3, Corollary~6.10, Corollary~6.11]{GFS}):
\begin{equation}\label{schro}4\Delta +Scal^g-\frac1{12}||T||^2=4\Delta +Scal+\frac16||T||^2.
\end{equation}
Note that we use here the Hodge Laplacian which differs with a sign from the Laplace-Beltrami operator used in  \cite[Lemma~6.3, Corollary~6.10, Corollary~6.11]{GFS}).

 In terms of the torsion connection \eqref{gein1} can be written in the form (see \cite{IS})
\begin{equation}\label{gein4}
Ric_{ij}=-\sb_i\sb_j f, \qquad \d T_{ij}=-df_sT_{sij}, \qquad dT=0.
\end{equation}
This combined with Proposition~\ref{grsol}  yield 
\begin{thrm}\label{nnnew}
A compact  integrable $G_2$ manifold  $(M,\p)$ with closed torsion is a steady generalized gradient Ricci soliton, i.e. there exists a smooth function $f$ on  $(M,\p)$ such that the equivalent equations \eqref{gein1} and \eqref{gein4} hold.
\end{thrm}
We also have
\begin{thrm}\label{inf}
Let $(M,\p)$ be a compact  integrable $G_2$ manifold with closed torsion, $dT=0$. The next two conditions are equivalent.
 \begin{itemize}
\item[a).] $(M,\p)$  is a steady generalized gradient Ricci soliton;
\item[b).]  For $f$ determined  by the first eigenfunction $u$ of the  Schr\"odinger operator \eqref{schro} with  $u=\exp(-\frac12f)$, the vector field $$V=\theta-df \quad  is \quad \sb-parallel, \quad \sb V=0.$$
\end{itemize}
Consequently, on any compact   integrable $G_2$ manifold with closed torsion there exists a $\sb$-parallel vector field $V$  which determines $d\theta$ and  preserves the $G_2$ structure $(g,\p)$  and the torsion 3-form $T$,
\begin{equation}\label{vsu3}
d\theta=V\lrcorner T, \quad {\cal{L}}_Vg={\cal{L}}_V\p={\cal{L}}_VT=0.
\end{equation}
If $V$ vanishes at one point then $V=0$ and, if in addition, the $G_2$ structure is strictly integable, $\p\wedge d\p=0$, then $T=0$, the compact $G_2$ manifold is parallel, $\LC\p=0$ and $f=const$.
\end{thrm}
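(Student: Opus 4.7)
The plan is to treat the three parts of the theorem separately: first, the equivalence (a)$\Leftrightarrow$(b) as a direct algebraic consequence of Theorem~\ref{closT}; second, the structural identities $d\theta = V\lrcorner T$, $\mathbb{L}_V g = 0$, $\mathbb{L}_V \p = 0$; third, the rigidity when $V$ vanishes combined with strict integrability.

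For the equivalence, I would use $Ric_{ij} = -\sb_i\theta_j$ from Theorem~\ref{closT}. The first equation of \eqref{gein4}, $Ric_{ij} = -\sb_i\sb_j f$, is then literally $\sb_i(\theta_j - \sb_j f) = \sb_i V_j = 0$. The converse direction is immediate, and the second equation of \eqref{gein4}, $\delta T = -df\lrcorner T$, follows from the antisymmetric part of $\sb V = 0$ via \eqref{nnewt} together with the Ricci commutator on a function, $\sb_i\sb_j f - \sb_j\sb_i f = -T_{ijk}\sb_k f$. Hence (a)$\Leftrightarrow$(b) is a one-line reformulation.

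For the structural identities, the same Ricci commutator combined with \eqref{ttg2} gives $d^{\sb}\theta_{ij} = \sb_i\sb_j f - \sb_j\sb_i f = -(df\lrcorner T)_{ij}$, so $d\theta = d^{\sb}\theta + \theta\lrcorner T = V\lrcorner T$. Applying \eqref{tsym} I obtain $\LC_i V_j + \LC_j V_i = (\sb_i V_j + \sb_j V_i) - (T_{ijk} + T_{jik})V_k = 0$ by antisymmetry of $T$ and $\sb V = 0$, so $V$ is Killing. For $\mathbb{L}_V \p = 0$, I would invoke the general Lie-derivative formula for a connection with torsion: under $\sb V = 0$ and $\sb\p = 0$ it reduces to the natural $\mathfrak{so}(7)$-action of the 2-form $V\lrcorner T$ on $\p$. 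Karigiannis' observation \cite{Kar1} that $d\theta \in \Lambda^2_{14} \cong g_2$ on integrable $G_2$ manifolds, combined with $V\lrcorner T = d\theta$ from the previous step, places this 2-form in the Lie algebra of $G_2$, on which $\p$ is fixed by definition. Hence the action is trivial and $\mathbb{L}_V \p = 0$.

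For the rigidity, $\sb V = 0$ and $V_p = 0$ force $V\equiv 0$ by parallel transport, whence $\theta = df$. Under strict integrability ($\lambda = 0$), comparing the second identity of \eqref{ricg2} with $Scal = \delta\theta$ from Theorem~\ref{closT} produces the pointwise identity $\delta\theta = -\|\theta\|^2 + \tfrac{1}{6}\|T\|^2$; integrating (using $\int_M\delta\theta = 0$) yields $\int_M\|T\|^2 = 6\int_M\|\theta\|^2$. Unfolding the eigenvalue equation $Hu = \mu_1 u$ for $u = e^{-f/2}$ gives pointwise $3\delta\theta + \|\theta\|^2 + \tfrac{1}{6}\|T\|^2 = \mu_1$, and the Perelman-$\lambda$ rigidity from \cite{GFS} forces $\mu_1 = 0$ on a compact steady gradient soliton. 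Substituting $\delta\theta$ then gives $\|T\|^2 = 3\|\theta\|^2$ pointwise; integrating and comparing with the earlier integrated identity forces $\int_M\|\theta\|^2 = 0$, so $\theta = 0$ and $T = 0$. With $T = 0$ we get $\sb = \LC$, hence $\LC\p = \sb\p = 0$, and $df = \theta = 0$ makes $f$ constant on the connected $M$. The main obstacle is precisely the $\mu_1 = 0$ step, which relies on the generalized Perelman energy/$\lambda$-functional machinery in \cite{GFS} not redeveloped here; everything else is a careful but routine unwinding of the Ricci commutator on functions, the Lie-derivative formula for a torsion connection, and the $\Lambda^2 = \Lambda^2_7\oplus\Lambda^2_{14}$ decomposition.
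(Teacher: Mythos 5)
Your treatment of the equivalence a)$\Leftrightarrow$b) and of the identities \eqref{vsu3} is correct and follows essentially the same route as the paper: $Ric=-\sb\theta$ from Theorem~\ref{closT} turns the first soliton equation into $\sb V=0$, the commutator $\sb_i\sb_jf-\sb_j\sb_if=-T_{ijs}\sb_sf$ handles $\delta T$, and $\mathbb{L}_V\p=0$ comes from $\sb V=\sb\p=0$ together with $V\lrcorner T=d\theta\in\Lambda^2_{14}\cong g_2$. No issues there.

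The rigidity part, however, has a genuine gap. Your argument hinges on the claim that the first eigenvalue $\mu_1$ of the Schr\"odinger operator \eqref{schro} vanishes on a compact steady generalized gradient soliton, attributed to the Perelman-type $\lambda$-functional machinery of \cite{GFS}. This is false: for a compact Lie group with bi-invariant metric and Cartan torsion $T=-[\cdot,\cdot]$ (a steady generalized gradient soliton with $f=const$, and precisely the class of examples in Section~\ref{bi}), one has $Scal=0$, hence $Scal^g-\frac1{12}||T||^2=\frac16||T||^2>0$ is a positive constant, the first eigenfunction is constant, and $\mu_1=\frac16||T||^2>0$. Worse, in your own setting the identity you derive, $4\delta\theta+2||\theta||^2=\mu_1$, integrates to $\mu_1\,\mathrm{Vol}(M)=2\int_M||\theta||^2$, so $\mu_1=0$ is \emph{equivalent} to $\theta=0$, i.e.\ to the conclusion you are trying to reach; invoking it begs the question. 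The two pointwise identities you actually have at your disposal ($\delta\theta=-||\theta||^2+\frac16||T||^2$ and the eigenvalue equation) are compatible for any $\mu_1\ge0$ and yield no contradiction. The paper closes this step by a completely different, elementary argument: with $V=0$ one has $\theta=df$, and strict integrability gives from \eqref{torcy} that $T=*(-d\p+df\wedge\p)=-*\bigl(e^{f}d(e^{-f}\p)\bigr)$, whence $\int_Me^{-f}||T||^2\,vol=\pm\int_M e^{-f}\,dT\wedge\p=0$ after integration by parts using $dT=0$; this forces $T=0$, then $\LC\p=0$ and $df=\theta=0$. You would need to replace your spectral argument by this (or an equivalent) no-go computation.
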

\begin{proof}
To prove b) follows from a) observe that the  condition $dT=0$ and \eqref{rics} imply 
$$Ric=-\sb\theta, \quad -\d T=-d^{\sb}\theta=-d\theta+\theta\lrcorner T$$ by Theorem~\ref{closT} and \eqref{rics}. The latter  combined with \eqref{gein4} yield \[\sb(\theta-df)=0,\qquad d\theta=V\lrcorner T.\] 

For the converse, b)  yields $\sb\theta=\sb df$, which, combined  with Theorem~\ref{closT}  and \eqref{rics}  gives
\[Ric_{ij}=-\sb_i\theta_j=-\sb_i\sb_jf,\quad -\d T_{ij}=Ric_{ij}-Ric_{ji}=df_sT_{sij}.\]
since $0=d^2f=\LC_i\LC_jf-\LC_j\LC_if=\sb_i\sb_jf-\sb_j\sb_if+df_sT_{sij}$. 
Hence, \eqref{gein4} holds which proves the equivalences between a) and  b).

By Theorem~\ref{nnnew}, any compact integrable $G_2$ manifold is a steady generalized  gradient Ricci soliton and therefore the $\sb$-parallel vector field $V$ do exists because of b).

We have $( {\cal{L}}_Vg)_{ij}=\LC_iV_j+\LC_jV_i=\sb_iV_j+\sb_jV_i$ because of \eqref{tsym} the symmetric parts of $\LC V$  and  $\sb V$ coincide. Now, the condition $\sb V=0$ yields  $( {\cal{L}}_Vg)=0$. Hence $V$ is Killing.

Using $\sb V=0$, we obtain from the definitions of the Lie derivative and the torsion   (see e.g. \cite{KN}) 
\begin{multline*}
( {\cal{L}}_V\p)(X,Y,Z)=V\p(X,Y,Z)-\p([V,X],Y,Z)-\p(X,[V,Y],Z)-\p(X,Y,[V,Z])\\=V\p(X,Y,Z)-\p(\sb_VX,Y,Z)-\p(X,\sb_VY,Z-\p(X,Y,\sb_VZ)\\+T(V,X,e_a)\p(e_a,Y,Z)+T(V,Y,e_a)\p(X,e_a,Z)+T(V,Z,e_a)\p(X,Y,e_a)\\=(\sb_V\p)(X,Y,Z)+d\theta(X,e_a)\p(e_a,Y,Z)+d\theta(Y,e_a)\p(e_a,Z,X)+d\theta(Z,e_a)\p(e_a,X,Y)=0,
\end{multline*}
where we apply the already proved identity $d\theta=V\lrcorner T$ to get the second equality, use $\sb \p=0$ and that $d\theta\in \frak{g}_2\cong\Lambda^2_{14}$  to achieve the last identity.   Applying the Cartan formula for the Lie derivative, we obtain using $dT=0$ and the already proved first equality in \eqref{vsu3} that
\[{\cal{L}}_VT=V\lrcorner(dT)+d(V\lrcorner T)=d^2\theta=0. \]
The proof of \eqref{vsu3} is completed.

If $V=0$ at one point then $V=0$ because $\sb V=0$ and then the Lee form $\theta=df$ is an exact form. This combined with $dT=0$ and $d\p\wedge\p=0$ implies $T=df=0$ since M is compact, a fact  well known in physics as no go result \cite{GMW,GMPW}, see also \cite[Theorem~4.7]{Wit}, \cite[Proposition~3.1]{CGFT}. 
\end{proof}
 We  generalized the mentioned above no go theorem   assuming weaker condition on $dT$. First, it follows from \eqref{ricg2} and \eqref{dtt} that the four form $dT\in\Lambda^4_7\oplus\Lambda^4_{27}$ exactly when the characteristic scalar curvature is equal to the codiferential of the Lee form, $Scal=\delta\theta$ and $dT\in\Lambda^4_{27}$ if and only if  
the $G_2$ structure is of constant type and  $Scal=\delta\theta$.
\begin{thrm}\label{exact}
Let $(M,\p)$ be a compact integrable  $G_2$ manifold with an exact Lee form, $\theta=df$. If the four form  $dT\in\Lambda^4_7\oplus\Lambda^4_{27}$  then the following integral identity holds
\begin{equation}\label{ex11}\int_Me^{-f}\Big(||T||^2-\frac16(d\p,\ph)^2 \Big)vol.=0.
\end{equation}
If $(M,\p)$ is a compact strictly integrable $G_2$ manifold with $\theta=df$ and $Scal=\d df$ 
then $T=0$ and  the $G_2$ manifold is parallel, $\LC\p=0$.
\begin{proof}
 Suppose $\theta=df$ for a smooth function $f$ and $Scal=\d df$. Then \eqref{ricg2} yields $dT_{iabc}\ph_{iabc}=0$ and we obtain from \eqref{g22} 
\begin{equation}\label{exac}
||T||^2-\frac16(d\p,\ph)^2=6\Delta f+6||df||^2=6e^{f}\Delta u,
\end{equation}
where $\Delta f$ is the Laplace operator $\Delta f=-\LC_i\LC_if=-\sb_i\sb_if$ acting on smooth function $f$ since the torsion of $\sb$ is  a 3-form and the smooth function $u=e^{-f}f$.

Multiply  \eqref{exac} with $e^{-f}$ and integrate the obtained equality on the compact $M$ to achieve \eqref{ex11}.

If the $G_2$ manifold is strictly integrable, $(d\p,\ph)=0$, then \eqref{ex11} 
implies $T=0$ and $\LC\p=0$.
\end{proof}
\end{thrm}

\subsection{Examples}\label{bi}
 Basic examples of compact integrable $G_2$ manifolds with closed torsion are provided with the group manifolds which have flat torsion connection.  
 More precisely, it is well known that any compact 7-dimensional Lie group equipped with a biinvariant metric  and a left-invariant $G_2$ structure $\p$, generating the biinvariant metric, together with the left-invariant flat Cartan connection having closed torsion 3-form $T=-[.,.],$ preserving the $G_2$ structure $\p$, is an
invariant $G_2$ structure with a closed torsion 3-form, $dT=0$. However, the corresponding Lee form can be  closed but not exact and the constant type can be different from zero.
 We describe here some examples.
 
 \begin{exam}We consider $G=S^3\times S^3\times S^1=SU(2)\times SU(2)\times S^1$ with the biinvariant metric and the left-invariant  $G_2$ structure $\p$ and the flat left invariant  Cartan  connection with closed  torsion $T=-[.,.]$  which preserves $\p$, (see e.g.  \cite[Example~12.1]{GIP})  and investigated in detail in \cite{FMR}. 

We take the following description from \cite[Proposition~6.2]{FMR}.  On the group $G=SU(2)\times SU(2)\times S^1$ with Lie algebra $g=\frak{su}(2)\oplus \frak{su}(2)\oplus\mathbb R$ and structure equations   
\[de_1=e_{23},\quad de_2=e_{31},\quad de_3=e_{12},\quad de_4=e_{56},\quad de_5=e_{64}\quad de_6=e_{45}, \quad de_7=0\]
one considers the  family of left-invariant $SU(3)$ structure $(F,\ps_t,\sp_t)$ on $\frak{su}(2)\oplus \frak{su}(2)$ defined by
\[F=e_{14}+e_{25}-e_{36},\quad \ps_t=\cos t\ps+\sin t\sp, \quad \sp_t=-\sin t\ps+\cos t\sp, \quad where\]
\[\ps=e_{123}+e_{156}-e_{246}-e_{345},\quad \sp=e_{456}+e_{234}-e_{135}-e_{126} \]
and the family of $G_2$ structures on $G=SU(2)\times SU(2)\times S^1$ defined by
\[\p_t=F \wedge e_7+\ps_t, \quad *\p_t=\frac12F\wedge F+\sp_t\wedge e_7.\]
One obtaines after short calculations 
\[ d\p_t\wedge\p_t=
7(\cos t+\sin t)vol., \quad 
 d*\p_t=
\frac12(\cos t-\sin t)F\wedge F\wedge e_7=(\cos t-\sin t)e_7\wedge*\p_t,\]
\[\theta_t=(\cos t-\sin t)e_7.\]
It is known  from \cite[Proposition~4.5, Proposition~6.2]{FMR} that the left-invariant $G_2$ structures 
$\p_t$ are integrable, induce the same biinvariant metric on the group $G$, have the same closed and co-closed torsion $T=e_{123}+e_{456}$ which is the product of the 3-forms $T=-g([,],.)$ of each factor $SU(2)$. The characteristic connection is the left-invariant Cartan connection with torsion $T=-[,].$ Moreover, $\p_{\frac{3\pi}4}$ is strictly integrable,
$d\p_{\frac{3\pi}4}\wedge\p_{\frac{3\pi}4}=0$ and has closed Lee form, 
$\theta_{\frac{3\pi}4}=-\sqrt{2}de_7$. The structure $\p_0$ is of constant type  $d\p_0\wedge\p_0=7vol$ with closed Lie form $\theta_0=e_7$ while  $\p_{\frac{\pi}4}$ is cocalibrated of constant type, $\delta\p_{\frac{\pi}4}=0$ and $d\p_{\frac{\pi}4}\wedge\p_{\frac{\pi}4}=7\sqrt{2}.vol$.
\end{exam}
\begin{exam}
If one takes the left-invariant $G_2$ structure on  the group $G=SU(2)\times SU(2)\times U(1)$ defined by  \eqref{g2} it is easy to get that it is strictly integrable with closed torsion 3-form $T=e_{123}+e_{456}$ and non closed Lee form $\theta=e_4-e_3, d\p\wedge\p=0$. 
\end{exam}
\begin{exam}\label{noflate} We take this example from \cite{PP} (see also \cite[Example~6.4]{FF}). Let $N^4$ denote a hyperK\"ahler 4-manifold with (self-dual) hyperK\"ahler triple given by $\omega_1,\omega_2,\omega_3$ and consider $M^7 = SU(2)\times N^4$ endowed with the product $G_2$-structure defined by $\p=\omega_1\wedge e_1+\omega_1\wedge e_1+\omega_1\wedge e_1-e_{123}$, where $e_1,e_2,e_3$ is the  left-invariant co-framing on $\frak{su}(2)$. Then $(M,\p)$ is a compact integrable $G_2$ non-flat space of constant type with vanishing Lee form and closed torsion \cite{PP}. Moreover, the characteristic Ricci tensor  vanishes \cite[Example~6.4]{FF} (c.f.  \eqref{ricg2} ).
\end{exam}
\begin{rmrk}\label{rrrr} {\rm To the best of our knowledge there are not known  compact  strictly integrable $G_2$ manifolds with closed torsion which are not a group manifold. It seems  very interesting to understand whether there exist compact  strictly integrable  $G_2$ manifolds with closed torsion and  non-flat characteristic connection.}
\end{rmrk}
We note that the compact assumption in the Remark~\ref{rrrr} seems to be essential since a non compact strictly integrable $G_2$ manifold with closed and co-closed torsion having non vanishing characteristic Ricci tensor, and therefore non-flat characteristic connection,  is constructed in \cite[Example~7.1]{FF}

\vskip 0.5cm
\noindent{\bf Acknowledgements} \vskip 0.1cm
\noindent 
We would like to thank Jeffrey Streets and Ilka Agricola for   useful remarks, comments and suggestions. We also thank to the anonymous referees for their valuable remarks and comments  which help us to improve the paper. We also thank the referees for paying our attention to Example~\ref{noflate}.

The research of S.I.  is partially  financed by the European Union-Next Generation EU, through the National Recovery and Resilience Plan of the Republic of Bulgaria, project N:
BG-RRP-2.004-0008-C01. The research of N.S.  is partially supported   by Contract KP-06-H72-1/05.12.2023 with the National Science Fund of Bulgaria, Contract 80-10-61 / 27.05.2025   with the Sofia University "St.Kl.Ohridski" and  the National Science Fund of Bulgaria, National Scientific Program ``VIHREN", Project KP-06-DV-7.

\vskip 0.5cm


\end{document}